\theoremstyle{plain}
\newtheorem{theorem}                 {Theorem}      [section]
\newtheorem{proposition}  [theorem]  {Proposition}
\newtheorem{corollary}    [theorem]  {Corollary}
\newtheorem{conjecture}   [theorem]  {Conjecture}
\newtheorem*{theorem*}               {Theorem\;{\bf \ref{th:
bih surf s4_parallel H}}}
\theoremstyle{definition}
\newtheorem{remark}       [theorem]  {Remark}
\newtheorem{defi}   [theorem]  {Definition}
\numberwithin{equation}{section}
\def \1{\mbox{${\mathbf 1}$}}
\DeclareMathOperator{\trace}{trace}
\DeclareMathOperator{\grad}{grad}
 \DeclareMathOperator{\Id}{Id}
\begin{document}

\title[Properties of biharmonic submanifolds in spheres]
{PROPERTIES OF BIHARMONIC SUBMANIFOLDS IN SPHERES\footnotemark}
\author{A. Balmu\c s}
\address{Faculty of Mathematics, ``Al.I.~Cuza'' University of Iasi\\
\newline
Bd. Carol I Nr. 11 \\
700506 Iasi, ROMANIA} \email{adina.balmus@uaic.ro,
oniciucc@uaic.ro}
\author{S. Montaldo}
\address{Universit\`a degli Studi di Cagliari\\
Dipartimento di Matematica\\
\newline
Via Ospedale 72\\
09124 Cagliari, ITALIA} \email{montaldo@unica.it}
\author{C. Oniciuc}

\begin{abstract}
In the present paper we survey the most recent classification
results for proper biharmonic submanifolds in unit Euclidean
spheres. We also obtain some new results concerning geometric
properties of proper biharmonic constant mean curvature submanifolds
in spheres.
\end{abstract}

\maketitle

\stepcounter{footnote}\footnotetext{The second author was supported
by the grant Start-up, University of Cagliari. The third author was
supported by PCE Grant PNII\textunderscore 2228 (502/2009),
Romania.}

\section{Introduction}\label{sec:1}

%%Ever since 1964, when J. Eells and J.H. Sampson published their
%%first survey on harmonic maps (see \cite{ES}), there have been
%%many attempts to generalize the notions of harmonic map and
%%harmonic (minimal) Riemannian immersion. One can, for example,
%%consider the variational problems associated to the functionals
%%obtained by integrating the squared norm of the tension field or
%%of the mean curvature vector field. Thus, {\it biharmonic maps}
%%are the critical points of the {\it bienergy functional}
%%$$
%%E_2:C^{\infty}(M,N)\to\mathbb{R} , \quad
%%E_2(\varphi)=\frac{1}{2}\int_{M}\, |\tau(\varphi)|^2\,v_g
%%$$
%%and {\it Willmore immersions} are the critical points of the {\it
%%Willmore functional}
%%$$
%%W:\mathrm{Imm}(M^2,N)\to\mathbb{R} , \quad W(\varphi)=\int_{M^2}\,
%%\big(|H|^2+K\big) \,v_{\varphi^{\ast}h}
%%$$
%%where $K$ is the sectional curvature of $(N,h)$ restricted to the
%%image of $M^2$. Although these variational problems lead to
%%natural generalizations of harmonic maps and minimal immersions,
%%biharmonic Riemannian immersions do not recover Willmore
%%immersions, not even when the ambient space is $\mathbb{R}^n$.

{\it Biharmonic maps} $\phi:(M,g)\to(N,h)$ between Riemannian
manifolds are critical points of the {\it bienergy functional}
$$
E_2(\phi)=\frac{1}{2}\int_{M}\,|\tau(\phi)|^2\,v_g,
$$
where $\tau(\phi)=\trace\nabla d\phi$ is the tension field of $\phi$
that vanishes for harmonic maps (see \cite{ES}). The Euler-Lagrange
equation corresponding to $E_2$ is given by the vanishing of the
{\it bitension field}
$$
\tau_2(\phi)=-J^{\phi}(\tau(\phi))=-\Delta\tau(\phi) -\trace
R^N(d\phi,\tau(\phi))d\phi,
$$
where $J^{\phi}$ is formally the Jacobi operator of $\phi$ (see
\cite{J1}). The operator $J^{\phi}$ is linear, thus any harmonic map
is biharmonic. We call {\it proper biharmonic} the non-harmonic
biharmonic maps. Geometric and analytic properties of proper
biharmonic maps were studied, for example, in \cite{PBAFSO, TL,RM}.

The submanifolds with non-harmonic (non-minimal) biharmonic
inclusion map are called {\it proper biharmonic submanifolds}.
Initially encouraged by the non-existence results for proper
biharmonic submanifolds in non-positively curved space forms (see,
for example, \cite{CMO2, BYC_ISH,D,HV}), the study of proper
biharmonic submanifolds in spheres constitutes an important research
direction in the theory of proper biharmonic submanifolds.

The present paper is organized as follows.

Section \ref{sec:2} is devoted to the main examples of proper
biharmonic submanifolds in spheres and to their geometric
properties, mainly regarding the type and the order in the sense of
Chen. Also, it gathers the most recent classification results for
such submanifolds (for detailed proofs see \cite{B}).

In Section \ref{sec:3} we present a series of new results concerning
geometric properties of proper biharmonic constant mean curvature
submanifolds in spheres. We begin with some identities which hold
for proper biharmonic submanifolds with parallel mean curvature
vector field (Propositions~\ref{prop: prop||MC} and \ref{prop:
prop||MC_from_Gauss_eq}). We then obtain some necessary conditions
that must be fulfilled by proper biharmonic constant mean curvature
submanifolds (Corollary~\ref{cor: conseq_type}) and we end this
section with a refinement, for hypersurfaces, of a result on the
estimate of the mean curvature of proper biharmonic submanifolds in
spheres (Theorem~\ref{th: raf_tip_hyper}).

The fourth section presents two open problems concerning the
classification of proper biharmonic hypersurfaces and the mean
curvature of proper biharmonic submanifolds in spheres.

In the last section we briefly present an interesting link between
proper biharmonic hypersurfaces and $II$-minimal hypersurfaces in
spheres.

Other results on proper biharmonic submanifolds in spaces of
non-constant sectional curvature can be found, for example, in
\cite{CMP, YJCHS, FO, FLMO, I, OU2, OU1}.

\section{Proper biharmonic submanifolds in spheres}\label{sec:2}

The attempt to obtain classification results for proper biharmonic
submanifolds in spheres was initiated with the following
characterization theorem.

\begin{theorem}[\cite{O2}]\label{th: bih subm S^n}
{\rm (i)} The canonical inclusion $\phi:M^m\to\mathbb{S}^n$ of a
submanifold $M$ in an $n$-dimensional unit Euclidean sphere is
biharmonic if and only if
\begin{equation}\label{caract_bih_spheres}
\left\{
\begin{array}{l}
\ \Delta^\perp H+\trace B(\cdot,A_H(\cdot))-mH=0
\\ \mbox{} \\
\ 4\trace A_{\nabla^\perp_{(\cdot)}H}(\cdot)+m\grad(\vert H
\vert^2)=0,
\end{array}
\right.
\end{equation}
where $A$ denotes the Weingarten operator, $B$ the second
fundamental form, $H$ the mean curvature vector field,
$\nabla^\perp$ and $\Delta^\perp$ the connection and the Laplacian
in the normal bundle of $M$ in $\mathbb{S}^n$, and $\grad$ denotes
the gradient on $M$.

If $M$ is a submanifold with parallel mean curvature vector field in
$\mathbb{S}^n$, then $M$ is biharmonic if and only if $\trace
B(\cdot,A_H(\cdot))=mH$.

{\rm (ii)} A hypersurface $M$ with nowhere zero mean curvature
vector field in $\mathbb{S}^{m+1}$ is biharmonic if and only if
\begin{equation}\label{eq: caract_bih_hipersurf_spheres}
\left\{
\begin{array}{l}
\Delta^\perp H-(m-|A|^2)H=0
\\ \mbox{} \\
\ 2A(\grad (|H|))+m|H|\grad(|H|)=0.
\end{array}
\right.
\end{equation}
If $M$ is a non-zero constant mean curvature hypersurface in
$\mathbb{S}^{m+1}$, then $M$ is proper biharmonic if and only if
$|A|^2=m$.
\end{theorem}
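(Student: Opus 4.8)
The plan is to reduce everything to an explicit formula for the bitension field of the isometric immersion $\phi\colon M^m\to\mathbb{S}^n$, which gives (i), and then to specialize to hypersurfaces for (ii). Since $\phi$ is an isometric immersion, $\tau(\phi)=mH$. To compute $\tau_2(\phi)=-\Delta\tau(\phi)-\trace R^{\mathbb{S}^n}(d\phi,\tau(\phi))d\phi$, I would fix $p\in M$, choose a local orthonormal frame $\{e_i\}$ on $M$ geodesic at $p$, and expand $\Delta(mH)$ at $p$ by repeatedly applying the Gauss and Weingarten formulas $\bar\nabla_XY=\nabla_XY+B(X,Y)$ and $\bar\nabla_XV=-A_VX+\nabla^\perp_XV$, where $\bar\nabla$ is the connection of $\mathbb{S}^n$. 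Sorting the resulting terms by type produces two normal contributions, $\Delta^\perp H$ and $\trace B(\cdot,A_H(\cdot))$, and two tangent ones, $\trace\big(\nabla_{(\cdot)}(A_H(\cdot))\big)$ and $\trace A_{\nabla^\perp_{(\cdot)}H}(\cdot)$. For the curvature term one uses $R^{\mathbb{S}^n}(X,Y)Z=\langle Y,Z\rangle X-\langle X,Z\rangle Y$ together with $H\perp T_pM$, which makes it a purely normal multiple of $H$. Matching the normal and tangent components of $\tau_2(\phi)=0$ separately, and discarding the overall nonzero factor $m$, is then exactly what produces the two equations of \eqref{caract_bih_spheres}. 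The parallel mean curvature case follows at once, since $\nabla^\perp H=0$ forces $\Delta^\perp H=0$, $\trace A_{\nabla^\perp_{(\cdot)}H}(\cdot)=0$ and $\grad(|H|^2)=0$.

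The delicate point, and the step I expect to be the main obstacle, is handling the tangent term $\trace\big(\nabla_{(\cdot)}(A_H(\cdot))\big)$. First I would use $(\nabla_XA_H)Y=(\nabla_XA)(H,Y)+A_{\nabla^\perp_XH}Y$ to peel off one more copy of $\trace A_{\nabla^\perp_{(\cdot)}H}(\cdot)$. It then remains to evaluate $\sum_i(\nabla_{e_i}A)(H,e_i)$, and this is where the Codazzi equation enters: since $\mathbb{S}^n$ has constant curvature one has $\langle(\nabla_XA)(\xi,Y),Z\rangle=\langle(\nabla^\perp_XB)(Y,Z),\xi\rangle$ with $(\nabla^\perp_WB)(U,V)$ symmetric in $W$ and $U$, so $\langle(\nabla_{e_i}A)(H,e_i),X\rangle=\langle(\nabla^\perp_{e_i}B)(e_i,X),H\rangle=\langle(\nabla^\perp_XB)(e_i,e_i),H\rangle$; summing over $i$ and using $\sum_i(\nabla^\perp_XB)(e_i,e_i)=\nabla^\perp_X(\trace B)=m\nabla^\perp_XH$ at $p$ gives $\sum_i(\nabla_{e_i}A)(H,e_i)=\tfrac{m}{2}\grad(|H|^2)$. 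Feeding this back in and clearing the denominator yields precisely the second equation of \eqref{caract_bih_spheres}.

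For part (ii) I would simply specialize (i) to a hypersurface $M$ with nowhere zero mean curvature. Since $H$ never vanishes, $\eta=H/|H|$ is a globally defined unit normal and $H=|H|\eta$; the normal bundle being a line bundle forces $\nabla^\perp\eta=0$, so $\nabla^\perp_XH=\langle\grad(|H|),X\rangle\eta$ and $\Delta^\perp H=(\Delta|H|)\eta$. With $A=A_\eta$ one has $A_H=|H|A$ and $B(X,Y)=\langle AX,Y\rangle\eta$, hence $\trace B(\cdot,A_H(\cdot))=|A|^2H$ and $\trace A_{\nabla^\perp_{(\cdot)}H}(\cdot)=A(\grad(|H|))$. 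Substituting these into \eqref{caract_bih_spheres} and using $\grad(|H|^2)=2|H|\grad(|H|)$ gives \eqref{eq: caract_bih_hipersurf_spheres}. Finally, if $M$ has non-zero constant mean curvature, then $\grad(|H|)=0$, so the second equation of \eqref{eq: caract_bih_hipersurf_spheres} holds automatically and $\Delta^\perp H=0$; the first equation collapses to $(m-|A|^2)H=0$, and since $H$ never vanishes — so $M$ is automatically non-minimal — this holds if and only if $|A|^2=m$, that is, if and only if $M$ is proper biharmonic.
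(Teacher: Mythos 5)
Your derivation is correct. The paper itself does not prove Theorem~\ref{th: bih subm S^n} --- it is quoted from \cite{O2} --- but your argument is precisely the standard one used there and in \cite{CMO2}: split $\Delta(mH)$ via the Gauss--Weingarten formulas, use the constant-curvature Codazzi equation to convert $\sum_i(\nabla_{e_i}A)(H,e_i)$ into $\tfrac{m}{2}\grad(|H|^2)$, evaluate the curvature term as $-m^2H$, and separate tangent and normal components; the hypersurface case and the two corollaries for parallel $H$ and constant $|H|$ follow exactly as you describe. All the individual identities you invoke (in particular $\trace B(\cdot,A_H(\cdot))=|A|^2H$ and $\trace A_{\nabla^\perp_{(\cdot)}H}(\cdot)=A(\grad|H|)$ for a hypersurface) check out.
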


We note that the compact minimal, i.e. $H=0$, hypersurfaces with
$|A|^2=m$ in $\mathbb{S}^{m+1}$ are the Clifford tori
$\mathbb{S}^k(\sqrt{k/m})\times\mathbb{S}^{m-k}(\sqrt{(m-k)/m})$,
$1\leq k\leq m-1$ (see \cite{CCK}).

Before presenting the basic examples of proper biharmonic
hypersurfaces in spheres, together with some of their geometric
properties, we recall the following definition (see, for example,
\cite{BYC1}), which shall be used throughout the paper.

\begin{defi}
An isometric immersion of a compact manifold $M$ in
$\mathbb{R}^{n}$, $\varphi:M\to \mathbb{R}^{n}$, is called of {\it
k-type} if its spectral decomposition contains exactly $k$ non-zero
terms, excepting the center of mass $\varphi_0=\frac{1}{\mathrm{Vol}
(M)}\int_M\varphi\,v_g$. More precisely,
$$
\varphi=\varphi_0+\sum_{t=p}^q \varphi_t,
$$
where $\Delta\varphi_t=\lambda_t\varphi_t$ and
$0<\lambda_1<\lambda_2<\cdots\uparrow\infty$.

The pair $[p,q]$ is called {\it the order of the immersion}
$\varphi:M\to \mathbb{R}^{n}$.
\end{defi}

\newpage

\subsection{The main examples of proper biharmonic submanifolds in spheres}\label{sec:2.1}

\subsubsection*{The hypersphere $\mathbb{S}^{m}(1/{\sqrt
2})\subset\mathbb{S}^{m+1}$} \quad

\noindent Consider $\mathbb{S}^{m}(a)=\Big\{(x^1,\ldots, x^{m},
x^{m+1},b)\in\mathbb{R}^{m+2}: |x|=a\Big\}\subset\mathbb{S}^{m+1}$,
where $a^2+b^2=1$. If $H$ is the mean curvature vector field of
$\mathbb{S}^{m}(a)$ in $\mathbb{S}^{m+1}$, one gets $\nabla^\perp
H=0$, $|H|=\frac{|b|}{a}$ and $|A|^2=m\frac{b^2}{a^2}$.

Theorem \ref{th: bih subm S^n} implies that $\mathbb{S}^{m}(a)$ is
proper biharmonic in $\mathbb{S}^{m+1}$ if and only if $a=1/{\sqrt
2}$ (see \cite{CMO1}).

\subsubsection*{The generalized Clifford torus $\mathbb{S}^{m_1}(1/{\sqrt 2})
\times\mathbb{S}^{m_2}(1/{\sqrt 2})\subset\mathbb{S}^{m+1}$}\quad

\noindent The generalized Clifford torus,
$M=\mathbb{S}^{m_1}(1/{\sqrt 2})\times\mathbb{S}^{m_2}(1/{\sqrt
2})$, $m_1+m_2=m$, $m_1\neq m_2$, was the first example of proper
biharmonic submanifold in $\mathbb{S}^{m+1}$ (see \cite{J1}).

Consider
\begin{eqnarray*}
M&=&\Big\{(x^1,\ldots,x^{m_1+1}, y^1,\ldots,
y^{m_2+1})\in\mathbb{R}^{m+2}: |x|=a_1,
|y|=a_2\Big\}\\
&=&\mathbb{S}^{m_1}(a_1)\times\mathbb{S}^{m_2}(a_2)\subset\mathbb{S}^{m+1},
\end{eqnarray*}
where $a_1^2+a_2^2=1$. Then $\nabla^\perp H=0$,
$|H|=\frac{1}{a_1a_2m}|a_2^2m_1-a_1^2m_2|$ and
$|A|^2=\left(\frac{a_2}{a_1}\right)^2m_1+\left(\frac{a_1}{a_2}\right)^2m_2$.

From Theorem \ref{th: bih subm S^n} it follows that $M$ is proper
biharmonic in $\mathbb{S}^{m+1}$ if and only if $a_1=a_2=1/{\sqrt
2}$ and $m_1\neq m_2$ (see, also, \cite{CMO2}).

Inspired by these basic examples, two methods for constructing
proper biharmonic submanifolds of codimension higher than one in
$\mathbb{S}^n$ were given.

\begin{theorem}[\cite{CMO2}]\label{th: rm_minim}
Let $M$ be a minimal submanifold of\,
$\mathbb{S}^{n-1}(a)\subset\mathbb{S}^n$. Then $M$ is proper
biharmonic in $\mathbb{S}^{n}$ if and only if $a=1/{\sqrt 2}$.
\end{theorem}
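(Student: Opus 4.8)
The plan is to apply the parallel–mean–curvature characterization in Theorem~\ref{th: bih subm S^n}(i). Write $a^2+b^2=1$ with $b\geq 0$, let $\bar\nabla$ denote the Levi-Civita connection of $\mathbb{S}^n$, and let $\eta$ be the unit section of the normal bundle of $\mathbb{S}^{n-1}(a)$ in $\mathbb{S}^n$ for which $\mathbb{S}^{n-1}(a)$ is totally umbilical with $B^{\mathbb{S}^n}_{\mathbb{S}^{n-1}(a)}(X,Y)=\frac{b}{a}\langle X,Y\rangle\,\eta$, equivalently $A^{\mathbb{S}^n}_\eta=\frac{b}{a}\Id$ on $T\mathbb{S}^{n-1}(a)$. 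Note also that $\bar\nabla_X\eta=-\frac{b}{a}X$ is tangent to $\mathbb{S}^{n-1}(a)$, so $\eta$ is parallel in that normal bundle; this will be used to handle the normal connection of $M$.

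First I would invoke the composition-of-immersions formula for $M\subset\mathbb{S}^{n-1}(a)\subset\mathbb{S}^n$: for $X,Y$ tangent to $M$,
\[
B^{\mathbb{S}^n}_M(X,Y)=B^{\mathbb{S}^{n-1}(a)}_M(X,Y)+\frac{b}{a}\langle X,Y\rangle\,\eta ,
\]
where the first term is a section of the normal bundle of $M$ inside $\mathbb{S}^{n-1}(a)$ and hence is orthogonal to $\eta$. Tracing over an orthonormal frame of $TM$ and using that $M$ is minimal in $\mathbb{S}^{n-1}(a)$ gives $H=\frac{b}{a}\eta$, so $|H|=b/a$ is constant. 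Moreover, for $X$ tangent to $M$ the vector $\bar\nabla_X\eta=-\frac{b}{a}X$ lies in $TM$, so the component of $\bar\nabla_X\eta$ normal to $M$ in $\mathbb{S}^n$ vanishes; hence $\eta$, and therefore $H$, is parallel in the normal bundle of $M$ in $\mathbb{S}^n$. By the parallel–mean–curvature part of Theorem~\ref{th: bih subm S^n}(i), $M$ is biharmonic in $\mathbb{S}^n$ if and only if $\trace B^{\mathbb{S}^n}_M(\cdot,A_H(\cdot))=mH$.

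Next I would compute $A_H$: from the displayed splitting, $\langle A^{\mathbb{S}^n}_\eta X,Y\rangle=\langle B^{\mathbb{S}^n}_M(X,Y),\eta\rangle=\frac{b}{a}\langle X,Y\rangle$, so $A_\eta=\frac{b}{a}\Id$ and $A_H=A_{(b/a)\eta}=\frac{b^2}{a^2}\Id$. Therefore $\trace B^{\mathbb{S}^n}_M(\cdot,A_H(\cdot))=\frac{b^2}{a^2}\trace B^{\mathbb{S}^n}_M=\frac{b^2}{a^2}\,mH$, and the biharmonic condition collapses to $\frac{b^2}{a^2}H=H$. Finally I would isolate non-minimality: $M$ is minimal in $\mathbb{S}^n$ exactly when $H=0$, i.e.\ $b=0$, i.e.\ $a=1$; so for $a\neq 1$ we have $H\neq 0$, and $M$ is proper biharmonic if and only if $b^2=a^2$, which together with $a^2+b^2=1$ forces $a=1/\sqrt2$. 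Conversely, $a=1/\sqrt2$ gives $b^2=\frac12\neq 0$, hence $H\neq 0$ and the equation $\frac{b^2}{a^2}H=H$ holds, so $M$ is proper biharmonic. The only delicate point — the main thing to be careful about — is fixing the orientation of $\eta$ and the sign convention relating $A$ and $B$ consistently, so that the umbilicity factor is $\pm b/a$ and squares correctly to $b^2/a^2$; everything else is routine bookkeeping with the Gauss-type decomposition of the second fundamental form.
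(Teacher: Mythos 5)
Your proof is correct, and it follows exactly the route the paper sets up: the survey states this result without proof (citing \cite{CMO2}), but the intended argument is precisely the one you give, namely computing $H=\frac{b}{a}\eta$, $\nabla^\perp H=0$ and $A_H=\frac{b^2}{a^2}\Id$ from the decomposition of the second fundamental form, and then feeding this into the parallel--mean--curvature criterion $\trace B(\cdot,A_H(\cdot))=mH$ of Theorem~\ref{th: bih subm S^n}(i). Your computation also recovers the pseudo-umbilicity and $|H|=1$ at $a=1/\sqrt2$ noted in the paper's subsequent remark, so everything is consistent.
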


\begin{remark}
(i) This result, called the {\it composition property}, proved to be
quite useful for the construction of proper biharmonic submanifolds
in spheres. For instance, it implies the existence of closed
orientable embedded proper biharmonic surfaces of arbitrary genus in
$\mathbb{S}^4$ (see \cite{CMO2}).

(ii) All minimal submanifolds of $\mathbb{S}^{n-1}(1/{\sqrt
2})\subset\mathbb{S}^n$ are pseudo-umbilical, i.e. $A_H=|H|^2\Id$,
with pa\-rallel mean curvature vector field in $\mathbb{S}^n$ and
$|H|=1$.

(iii) Denote by $\phi:\mathbb{S}^{m}(1/{\sqrt
2})\to\mathbb{S}^{m+1}$ the inclusion of $\mathbb{S}^{m}(1/{\sqrt
2})$ in $\mathbb{S}^{m+1}$ and by
$\mathbf{i}:\mathbb{S}^{m+1}\to\mathbb{R}^{m+2}$ the canonical
inclusion. Let $\varphi:\mathbb{S}^{m}(1/{\sqrt
2})\to\mathbb{R}^{m+2}$, $\varphi=\mathbf{i}\circ \phi$, be the
inclusion of $\mathbb{S}^{m}(1/{\sqrt 2})$ in $\mathbb{R}^{m+2}$.
Then
\begin{equation}\label{eq: spectr_decomp_min}
\varphi=\varphi_0+\varphi_p,
\end{equation}
where $\varphi_0, \varphi_p:\mathbb{S}^{m}(1/{\sqrt
2})\to\mathbb{R}^{m+2}$, $ \varphi_0(x,1/\sqrt 2)=(0,1/\sqrt 2)$,
$\varphi_p(x,1/\sqrt 2)=(x,0)$ and $ \Delta\varphi_p=2m\varphi_p$.

Thus $\mathbb{S}^{m}(1/{\sqrt 2})$ is a $1$-type submanifold of
$\mathbb{R}^{m+2}$ with center of mass in $\varphi_0=(0,1/\sqrt 2)$
and eigenvalue $\lambda_p=2m$, which is the first eigenvalue of the
Laplacian on $\mathbb{S}^{m}(1/{\sqrt 2})$, i.e. $p=1$.

Moreover, it is not difficult to verify that all minimal
submanifolds in $\mathbb{S}^{m}(1/{\sqrt
2})\subset\mathbb{S}^{m+1}$, as submanifolds in $\mathbb{R}^{m+2}$,
have the spectral decomposition given by \eqref{eq:
spectr_decomp_min}.
\end{remark}

Non pseudo-umbilical examples were also produced by proving the
following {\it product composition property}.

\begin{theorem}[\cite{CMO2}]\label{th:hipertor}
Let $M_1^{m_1}$ and $M_2^{m_2}$ be two minimal submanifolds of
$\mathbb{S}^{n_1}(a_1)$ and $\mathbb{S}^{n_2}(a_2)$, respectively,
where $n_1+n_2=n-1$, $a_1^2+a_2^2=1$. Then $M_1\times M_2$ is proper
biharmonic in $\mathbb{S}^n$ if and only if $a_1=a_2=1/{\sqrt 2}$
and $m_1\neq m_2$.
\end{theorem}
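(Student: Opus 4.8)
The plan is to reduce the biharmonicity of the product $M_1\times M_2$ to the criterion of Theorem~\ref{th: bih subm S^n} by computing the relevant extrinsic quantities of $M_1\times M_2$ inside $\mathbb{S}^n$, exploiting the product structure. First I would set up coordinates: view $\mathbb{S}^{n_1}(a_1)\times\mathbb{S}^{n_2}(a_2)\subset\mathbb{R}^{n_1+1}\times\mathbb{R}^{n_2+1}=\mathbb{R}^{n+1}$ and recall that this product torus sits in $\mathbb{S}^n$ precisely when $a_1^2+a_2^2=1$. For $M_i$ minimal in $\mathbb{S}^{n_i}(a_i)$, the second fundamental form of $M_i\hookrightarrow\mathbb{R}^{n_i+1}$ splits into the (trace-free) part tangent to $\mathbb{S}^{n_i}(a_i)$ plus the umbilical normal part coming from the inclusion $\mathbb{S}^{n_i}(a_i)\hookrightarrow\mathbb{R}^{n_i+1}$, whose mean curvature vector has length $1/a_i$ and is parallel in the normal bundle. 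I would then assemble the second fundamental form $B$, the mean curvature vector field $H$, and the Weingarten operators $A$ of $M_1\times M_2$ in $\mathbb{S}^n$ from these pieces.

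The key computations are the three quantities that feed into Theorem~\ref{th: bih subm S^n}: one checks $\nabla^\perp H=0$ (so $M_1\times M_2$ has parallel mean curvature vector field), computes $|H|^2=\tfrac1{m^2}\bigl|\tfrac{m_1}{a_1^2}-\tfrac{m_2}{a_2^2}\bigr|^2\cdot(\text{suitable normalization})$ — more precisely the normal component of $H$ in $\mathbb{S}^n$ is $(m_1/(a_1^2 m))\,\nu_1+(m_2/(a_2^2 m))\,\nu_2$ projected orthogonally to the position vector, where $\nu_i$ is the outward unit normal of $\mathbb{S}^{n_i}(a_i)$ — and then computes $\trace B(\cdot,A_H(\cdot))$. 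Because each $M_i$ is minimal in $\mathbb{S}^{n_i}(a_i)$, the mixed terms simplify: $A_H$ acts as a scalar on each factor (the factors being "eigendirections"), so $\trace B(\cdot,A_H(\cdot))$ reduces to a combination of $|B|^2$-type sums on each factor plus umbilical contributions. Since $M_i$ has parallel mean curvature in $\mathbb{S}^n$, the second equation of \eqref{caract_bih_spheres} is automatic, and biharmonicity is equivalent to $\trace B(\cdot,A_H(\cdot))=mH$, which becomes a system of two scalar equations in $a_1,a_2,m_1,m_2$. Solving it yields $a_1^2=a_2^2=1/2$, and the condition $H\neq 0$ (properness) forces $m_1\neq m_2$; conversely, when $a_1=a_2=1/\sqrt2$ and $m_1\neq m_2$ one verifies directly that the equation holds and $H\neq0$.

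A cleaner route, which I would actually follow, is to invoke Theorem~\ref{th:hipertor} itself in the special case $M_i=\mathbb{S}^{m_i}(a_i)$ is not available since that is the statement; instead I would use Theorem~\ref{th: rm_minim} as a black box after first showing $M_1\times M_2$ is minimal in some small hypersphere. Concretely: if $a_1=a_2=1/\sqrt2$, then $M_1\times M_2$ is minimal in $\mathbb{S}^{m_1}(1/\sqrt2)\times\mathbb{S}^{m_2}(1/\sqrt2)$ only when $m_1=m_2$, so this shortcut does not literally apply — one genuinely needs the direct computation above. The main obstacle is therefore the bookkeeping in assembling $B$ and $A_H$ for the product and correctly identifying the normal bundle of $M_1\times M_2$ in $\mathbb{S}^n$ (which has rank $n-m = n_1-m_1+n_2-m_2+1$, combining the normal bundles of each $M_i$ in its sphere with the single "radial" direction inside $\mathbb{S}^n$); once the orthogonal decomposition is pinned down, the rest is the algebraic reduction of $\trace B(\cdot,A_H(\cdot))=mH$, and extracting $a_1=a_2=1/\sqrt2$ together with the $m_1\neq m_2$ properness condition from it.
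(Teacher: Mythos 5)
The paper itself does not prove Theorem~\ref{th:hipertor}; it is quoted from \cite{CMO2} (with detailed proofs deferred to \cite{B}). Your plan is nonetheless the standard argument from \cite{CMO2} and it does go through: since $\mathbb{S}^{n_1}(a_1)\times\mathbb{S}^{n_2}(a_2)$ has parallel second fundamental form in $\mathbb{S}^n$ and each $M_i$ is minimal in its sphere, $H$ is a constant multiple of the unit normal $\eta$ of the product of spheres, so $\nabla^\perp H=0$ and Theorem~\ref{th: bih subm S^n} reduces biharmonicity to $\trace B(\cdot,A_H(\cdot))=mH$. Minimality of each factor kills the components in the normal bundles $N_i$ of $M_i$ in $\mathbb{S}^{n_i}(a_i)$, so this is in fact a \emph{single} scalar equation along $\eta$ (not a system of two, as you say), namely $m_1\tfrac{a_2^2}{a_1^2}+m_2\tfrac{a_1^2}{a_2^2}=m$; with $u=a_2^2/a_1^2$ this factors as $(u-1)(m_1u-m_2)=0$, and the root $u=m_2/m_1$ forces $H=0$, so properness leaves $a_1=a_2=1/\sqrt 2$ and then $H\neq 0$ iff $m_1\neq m_2$. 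One side remark of yours is wrong, though harmless since you discard that route: $M_1\times M_2$ is \emph{always} minimal in $\mathbb{S}^{n_1}(a_1)\times\mathbb{S}^{n_2}(a_2)$ (a product of minimal immersions is minimal in the product), for any $m_1,m_2$; the actual reason Theorem~\ref{th: rm_minim} cannot be used as a shortcut is that the product of spheres is not a hypersphere $\mathbb{S}^{n-1}(a)$, and indeed the resulting submanifolds are not pseudo-umbilical.
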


\begin{remark}
(i) The proper biharmonic submanifolds of $\mathbb{S}^n$ constructed
as above are not pseudo-umbilical, but they still have parallel mean
curvature vector field, thus constant mean curvature, and
$|H|=\frac{|m_2-m_1|}{m_1+m_2}\in (0,1)$.

(ii) Let $\varphi:\mathbb{S}^{m_1}(1/{\sqrt 2})
\times\mathbb{S}^{m_2}(1/{\sqrt 2})\to\mathbb{R}^{m+2}$ be the
inclusion of $\mathbb{S}^{m_1}(1/{\sqrt 2})
\times\mathbb{S}^{m_2}(1/{\sqrt 2})$ in $\mathbb{R}^{m+2}$,
$m_1<m_2$, $m_1+m_2=m$. Then
\begin{equation}\label{eq: spectr_decomp_Cliff}
\varphi=\varphi_p+\varphi_q,
\end{equation}
where $\varphi_p, \varphi_q:\mathbb{S}^{m_1}(1/{\sqrt 2})
\times\mathbb{S}^{m_2}(1/{\sqrt 2})\to\mathbb{R}^{m+2}$,
$\varphi_p(x,y)=(x,0)$, $\varphi_q(x,y)=(0,y)$ and $
\Delta\varphi_p=2m_1\varphi_p$, $ \Delta\varphi_q=2m_2\varphi_q$.

Thus $\mathbb{S}^{m_1}(1/{\sqrt 2}) \times\mathbb{S}^{m_2}(1/{\sqrt
2})$ is a $2$-type submanifold of $\mathbb{R}^{m+2}$ with
eigenvalues $\lambda_p=2m_1$ and $\lambda_q=2m_2$, and it is
mass-symmetric, i.e. it has center of mass in the origin.

Since the eigenvalues of the torus $\mathbb{S}^{m_1}(1/{\sqrt 2})
\times\mathbb{S}^{m_2}(1/{\sqrt 2})$ are obtained as the sum of
eigenvalues of the spheres $\mathbb{S}^{m_1}(1/{\sqrt 2})$ and
$\mathbb{S}^{m_2}(1/{\sqrt 2})$, we conclude that $p=1$. Also,
$q=2$, i.e. $\mathbb{S}^{m_1}(1/{\sqrt 2})
\times\mathbb{S}^{m_2}(1/{\sqrt 2})$ has order $[1,2]$ in
$\mathbb{R}^{m+2}$, if and only if $m_2\leq 2(m_1+1)$. Note that
this holds, for example, for $\mathbb{S}^{1}(1/{\sqrt 2})
\times\mathbb{S}^{2}(1/{\sqrt 2})\subset\mathbb{S}^4$.

Moreover, it can be easily proved that all proper biharmonic
submanifolds in $\mathbb{S}^{m+1}$ obtained by means of the product
composition property, as submanifolds in $\mathbb{R}^{m+2}$, have
the spectral decomposition given by \eqref{eq: spectr_decomp_Cliff}.
\end{remark}

\subsubsection*{Other examples of proper biharmonic immersed submanifolds in
spheres}\quad

\noindent In \cite{S1} and \cite{AEMS} the authors studied the
proper biharmonic Legendre immersed surfaces and the proper
biharmonic $3$-di\-men\-sional anti-invariant immersed submanifolds
in Sasakian space forms. They obtained the explicit representations
of such submanifolds in the unit Euclidean $5$-dimensional sphere
$\mathbb{S}^5$.

\begin{theorem}[\cite{S1}]\label{th:Chen_T^2 in S^5}
Let $\phi:M^2\to\mathbb{S}^5$ be a proper biharmonic Legendre
immersion. Then the position vector field
$\varphi=\mathbf{i}\circ\phi=\varphi(u,v)$ of $M$ in $\mathbb{R}^6$
is given by
$$
\varphi(u,v)=\frac{1}{\sqrt 2}(e^{iu},ie^{-iu}\sin\sqrt 2 v,
ie^{-iu}\cos\sqrt 2 v),
$$
where $\mathbf{i}:\mathbb{S}^5\to\mathbb{R}^6$ is the canonical
inclusion.
\end{theorem}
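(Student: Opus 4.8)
The plan is to work inside the Sasakian structure of $\mathbb{S}^5$ and integrate the biharmonicity equations explicitly. First I would set up the standard Sasakian structure $(\varphi_0,\xi,\eta,g)$ on $\mathbb{S}^5$ induced from the complex structure $J$ on $\mathbb{R}^6=\mathbb{C}^3$, so that $\xi = J\mathbf{i}$ along the sphere. For a Legendre immersion $\phi:M^2\to\mathbb{S}^5$ one has $d\phi(T_pM)\perp\xi$ and, crucially, $J$ maps the tangent space to a subspace of the normal bundle; this makes the second fundamental form totally symmetric in the sense that $\langle B(X,Y),\varphi_0 Z\rangle$ is symmetric in all three arguments (the Legendre/C-totally real condition), which severely restricts the shape operators. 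I would then specialize the characterization in Theorem~\ref{th: bih subm S^n}(i), equation \eqref{caract_bih_spheres}, to this setting, using that $M$ is a surface ($m=2$) and that $\xi$ together with $\varphi_0(TM)$ spans the normal bundle of $M$ in $\mathbb{S}^5$.

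Next I would choose a good moving frame. Pick a local orthonormal frame $\{e_1,e_2\}$ on $M$ that diagonalizes the relevant shape operator, and take $\{\varphi_0 e_1,\varphi_0 e_2,\xi\}$ as the adapted normal frame. Writing the mean curvature vector field $H$ and computing $A_H$, $\nabla^\perp H$, $\Delta^\perp H$ and $\trace B(\cdot,A_H(\cdot))$ in this frame turns the system \eqref{caract_bih_spheres} into a system of ODEs for the coefficients of $B$ and for a local conformal (or isothermal) coordinate description of the metric. At this stage the known structure equations of Legendre surfaces in a Sasakian space form (Gauss, Codazzi and the additional equations coming from $\nabla\xi = -\varphi_0$) should collapse the unknowns: one expects that biharmonicity forces $|H|$ to be constant and forces a very rigid form of $B$, so that after choosing coordinates $(u,v)$ the surface is flat with a specific metric like $\frac{1}{2}(du^2 + 2\,dv^2)$ and $B$ is determined up to the ambiguity fixed by $u,v$. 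Then I would integrate the Gauss formula $\varphi_{uu}, \varphi_{uv}, \varphi_{vv}$ as a linear ODE system with constant coefficients in $\mathbb{R}^6$, whose solution is a sum of circular functions; matching with the sphere and Legendre constraints pins down the frequencies ($1$ in $u$ and $\sqrt 2$ in $v$) and yields, up to an isometry of $\mathbb{R}^6$ normalizing the constants of integration, the stated parametrization $\varphi(u,v)=\frac{1}{\sqrt2}(e^{iu}, ie^{-iu}\sin\sqrt2\,v, ie^{-iu}\cos\sqrt2\,v)$.

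The main obstacle I anticipate is the combinatorial bookkeeping in the second step: honestly reducing the two vector equations in \eqref{caract_bih_spheres} to a manageable ODE system requires carefully exploiting the Legendre condition and all the Sasakian structure equations simultaneously — in particular showing $|H|$ is constant and that the connection form in the normal bundle is essentially trivial in the adapted frame — and it is easy to lose track of which terms survive. A secondary subtlety is verifying that the apparent integration constants really can be removed by an ambient isometry (rather than producing a genuine moduli family); this amounts to checking that the constants are constrained by the unit-sphere and Legendre conditions to a single orbit. Once those two points are handled, the final integration is a routine linear ODE computation, and one should also double-check a posteriori that the resulting $\varphi$ is indeed proper biharmonic and Legendre, e.g.\ by plugging back into \eqref{caract_bih_spheres}.
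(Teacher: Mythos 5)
The first thing to note is that the paper does not prove this theorem at all: it is a survey article, and Theorem~\ref{th:Chen_T^2 in S^5} is imported verbatim from the reference \cite{S1} (Sasahara), with only a remark afterwards recording the consequences ($|H|=1/2$, flatness, the $[1,3]$-order spectral decomposition). So there is no in-paper proof to compare your attempt against; your outline does, however, follow the same general route as the cited source (adapted Sasakian frames for a Legendre surface, reduction of the biharmonic equation in that frame, integration of the resulting linear system).

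Judged on its own terms, the proposal has a genuine gap exactly where you flag the ``main obstacle'': the entire content of the theorem is the step you describe as ``one expects that biharmonicity forces $|H|$ to be constant and forces a very rigid form of $B$.'' Nothing in the outline actually derives this. To close it you would need to use, concretely: (a) that for a Legendre surface in $\mathbb{S}^5$ the shape operator in the Reeb direction vanishes, so $H$ lies in the image of the tangent bundle under the structure tensor and the normal frame $\{$(structure tensor applied to $e_1, e_2$), $\xi\}$ has computable normal connection forms; (b) the total symmetry of the cubic form $\langle B(X,Y),\,\cdot\,\rangle$, which cuts the independent components of $B$ down to a handful of functions; and (c) a case analysis of the tangential and normal parts of \eqref{caract_bih_spheres} that not only shows $|H|$ is constant but pins its value to $1/2$ and shows $M$ is flat with a specific constant second fundamental form --- the frequencies $1$ and $\sqrt 2$ in the final parametrization are determined by these constants, so without them the ``routine linear ODE'' cannot even be written down. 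Two smaller points: the induced metric of the stated parametrization is $du^2+dv^2$, not $\tfrac12(du^2+2\,dv^2)$ as you guess; and your use of $\varphi_0$ for the Sasakian structure tensor collides with the paper's use of $\varphi_0$ for the center of mass, which you should rename to avoid confusion. As it stands the proposal is a plausible roadmap, not a proof.
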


\begin{remark}
The map $\phi$ is a full proper biharmonic Legendre embedding of a
$2$-dimensional flat torus $\mathbb{R}^2/\Lambda$ into
$\mathbb{S}^5$, where the lattice $\Lambda$ is generated by
$(2\pi,0)$ and $(0,\sqrt{2}\pi)$. It has constant mean curvature
$|H|=1/2$, it is not pseudo-umbilical and its mean curvature vector
field is not parallel. Moreover, $\varphi=\varphi_p+\varphi_{q}$,
where
$$
\varphi_{p}(u,v)=\frac{1}{\sqrt 2}(e^{iu},0,0)
$$
$$
\varphi_{q}(u,v)=\frac{1}{\sqrt 2}(0,ie^{-iu}\sin\sqrt 2 v,
ie^{-iu}\cos\sqrt 2 v)
$$
and $ \Delta \varphi_{p}=\varphi_{p}$, $ \Delta
\varphi_{q}=3\varphi_{q}$. Thus $\varphi$ is a $2$-type immersion in
$\mathbb{R}^6$ with eigenvalues $1$ and $3$. In this case, $p=1$ and
$q=3$, i.e. $\varphi$ is a $[1,3]$-order immersion in
$\mathbb{R}^6$.
\end{remark}

\begin{theorem}[\cite{AEMS}]
Let $\phi:M^3\to\mathbb{S}^5$ be a proper biharmonic anti-invariant
immersion. Then the position vector field
$\varphi=\mathbf{i}\circ\phi=\varphi(u,v,w)$ of $M$ in
$\mathbb{R}^6$ is given by
$$
\varphi(u,v,w)=\frac{1}{\sqrt 2} e^{iw}(e^{iu},ie^{-iu}\sin\sqrt 2
v, ie^{-iu}\cos\sqrt 2 v).
$$
\end{theorem}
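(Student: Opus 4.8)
The plan is to exploit the rigidity of anti-invariant submanifolds tangent to the characteristic vector field of the standard Sasakian structure on $\mathbb{S}^5\subset\mathbb{C}^3=\mathbb{R}^6$, and to reduce the problem to the already classified case of proper biharmonic Legendre surfaces (Theorem~\ref{th:Chen_T^2 in S^5}). Write $(\phi_0,\xi,\eta,g)$ for the Sasakian structure, so that $\xi_p=ip$ at $p\in\mathbb{S}^5$ and $\phi_0$ is multiplication by $i$ in $\mathbb{C}^3$ followed by orthogonal projection onto $T_p\mathbb{S}^5$; recall $\overline\nabla_X\xi=-\phi_0 X$ and $(\overline\nabla_X\phi_0)Y=g(X,Y)\xi-\eta(Y)X$ for the ambient connection $\overline\nabla$. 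For an anti-invariant immersion $\phi:M^3\to\mathbb{S}^5$ the field $\xi$ is tangent to $M$; completing $e_1=\xi$ to a local orthonormal frame $\{e_1,e_2,e_3\}$ of $M$, the vectors $\phi_0 e_2,\phi_0 e_3$ form an orthonormal frame of the rank-two normal bundle. The first step is to extract the structural constraints: from $\overline\nabla_X\xi=-\phi_0 X$ one gets $\nabla^M_X\xi=0$ for all $X$ tangent to $M$ (so $\xi$ is a parallel unit field on $M$), $B(\xi,\xi)=0$, $A_{\phi_0 e_j}\xi=-e_j$ for $j=2,3$, and $\nabla^\perp_\xi(\phi_0 e_j)=0$; combined with the formula for $\overline\nabla\phi_0$, the cubic form $(X,Y,Z)\mapsto g(B(X,Y),\phi_0 Z)$ is totally symmetric on the contact distribution $\mathrm{span}\{e_2,e_3\}$.

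Since $\xi$ is parallel on $M$, the de~Rham theorem gives a local splitting $M=\mathbb{R}\times M_0^2$ along the $\xi$-direction, and $\phi$ intertwines the flow of $\xi$ on $M$ with the Hopf action $w\mapsto e^{iw}$ on $\mathbb{S}^5$; hence $\phi(M)$ is the Hopf tube $\{e^{iw}\phi(q):q\in M_0^2,\ w\in\mathbb{R}\}$ over a surface $M_0^2$, which is Legendre because $TM_0^2\perp\xi$ lies in the contact distribution and is sent by $\phi_0$ into the normal bundle. The second step is to transport biharmonicity through the tube construction. Using $B(\xi,\xi)=0$, $A_{\phi_0 e_j}\xi=-e_j$ and $\nabla^\perp_\xi(\phi_0 e_j)=0$ one shows that the mean curvature vector field of $M^3$ is $\xi$-parallel and proportional to that of $M_0^2$, that $B(\xi,A_H\xi)=H$, and that in $\Delta^\perp H$ and $\trace B(\cdot,A_H(\cdot))$ the $\xi$-direction contributes only structural terms; feeding this into the biharmonic system \eqref{caract_bih_spheres} with $m=3$ and simplifying by means of the Gauss and Codazzi equations of the space form $\mathbb{S}^5$ shows that $M^3$ is proper biharmonic if and only if the Legendre surface $M_0^2$ is proper biharmonic.

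The third step is then immediate: by Theorem~\ref{th:Chen_T^2 in S^5} the position vector of $M_0^2$ in $\mathbb{R}^6$ is $\frac{1}{\sqrt2}(e^{iu},ie^{-iu}\sin\sqrt{2}\,v,ie^{-iu}\cos\sqrt{2}\,v)$, and taking the Hopf tube, i.e. multiplying by $e^{iw}$, yields exactly the stated parametrization. A more computational alternative bypasses the reduction: one parametrizes $B$ by the four functions $g(B(e_j,e_j),\phi_0 e_k)$ (the rest of $B$ being fixed by the structure), imposes the Gauss--Codazzi equations of $\mathbb{S}^5$, substitutes into \eqref{caract_bih_spheres} to obtain a first-order ODE system for these functions and for the Christoffel symbols of $M$, solves it (the solution forces $M$ to be flat and pins the data down), and finally integrates the Gauss--Weingarten equations to recover $\varphi$. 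I expect the main obstacle to be precisely this core computation — in the reduction approach, verifying that the normal Laplacian and the curvature term of the biharmonic equation on the $3$-manifold genuinely collapse, via the Sasakian structure equations, to the $2$-dimensional Legendre equation; in the direct approach, integrating the resulting overdetermined ODE system — with everything else following routinely from the Sasakian structure equations together with Theorems~\ref{th: bih subm S^n} and~\ref{th:Chen_T^2 in S^5}.
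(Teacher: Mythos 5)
First, a caveat: this survey does not prove the statement at all --- it is quoted from \cite{AEMS}, and the only material the paper adds is the remark that follows it, recording the geometric data of the image ($|H|=1/3$, parallel mean curvature vector field, order $[2,4]$) and observing that, up to an orthogonal transformation of $\mathbb{R}^6$ commuting with the complex structure, $\phi$ double-covers $\mathbb{S}^1(1/\sqrt2)\times\mathbb{S}^1(1/2)\times\mathbb{S}^1(1/2)$, so the example also arises from the product composition property (Theorem~\ref{th:hipertor}). There is therefore no in-paper proof to compare against, and your argument must be judged on its own terms. Your structural preliminaries are correct: for a three-dimensional anti-invariant $M$ in $\mathbb{S}^5$ the characteristic field $\xi$ must be tangent (otherwise $\phi_0(TM)$ would be three-dimensional inside a rank-two normal bundle), $\xi$ is parallel on $M$ with $B(X,\xi)=\mp\phi_0X$, the distribution orthogonal to $\xi$ is integrable with Legendre integral surfaces, and $M$ is locally the Hopf tube $\{e^{iw}q\}$ over such a surface $L$. (A minor quibble: $\nabla^\perp_\xi(\phi_0e_j)=\phi_0\nabla_\xi e_j$, so this vanishes only after choosing the frame parallel along $\xi$; harmless.)

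The genuine gap is your second step, which you yourself flag as the main obstacle but then treat as established: the equivalence ``$M^3$ proper biharmonic if and only if the Legendre surface $L$ is proper biharmonic''. This is not a formal consequence of the structure equations; it requires tracking correction terms that must conspire to cancel. Concretely, writing $B_L$, $A^L$, $\nabla^{\perp_L}$ for the operators of $L$ in $\mathbb{S}^5$, one has $3H=2e^{iw}H_L$ and
$\trace B(\cdot,A_H(\cdot))=2H+\tfrac23e^{iw}\trace B_L(\cdot,A^L_{H_L}(\cdot))$,
the extra $2H$ coming from $B(\xi,A_H\xi)=H$ and from the $\xi$-components of $A_He_j$; moreover $\Delta^{\perp}H$ and $\Delta^{\perp_L}H_L$ do not correspond directly, because the normal bundle of $L$ in $\mathbb{S}^5$ has the additional direction $\xi$ and $\nabla^{\perp_L}_XH_L$ carries the nonzero $\xi$-component $-\langle H_L,\phi_0X\rangle$. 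Only after all of these contributions are assembled does the first line of \eqref{caract_bih_spheres} with $m=3$ reduce to the Legendre equation with $m=2$. You also never address the tangential equation (the second line of \eqref{caract_bih_spheres}), which must be transferred as well, since constancy of $|H|$ cannot be assumed a priori in a classification. None of this appears to be false --- the known example confirms one implication --- but as written the crux is asserted rather than proved, and your ``computational alternative'' is a description of what a proof would do rather than a proof. To complete the argument you must either carry out the tube-versus-Legendre comparison in full for both lines of the biharmonic system, or perform the direct integration of the Gauss--Codazzi and biharmonic ODE system as in \cite{AEMS} and \cite{S1}.
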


\begin{remark}
The map $\phi$ is a full proper biharmonic anti-invariant immersion
from a $3$-dimen\-sional flat torus $\mathbb{R}^3/\Lambda$ into
$\mathbb{S}^5$, where the lattice $\Lambda$ is generated by
$(2\pi,0,0)$, $(0,\sqrt{2}\pi,0)$ and $(0,0,2\pi)$. It has constant
mean curvature $|H|=1/3$, is not pseudo-umbilical, but its mean
curvature vector field is parallel. Moreover,
$\varphi=\varphi_{p}+\varphi_{q}$, where
$$
\varphi_{p}(u,v,w)=\frac{1}{\sqrt 2} e^{iw}(e^{iu},0,0)
$$
$$
\varphi_{q}(u,v,w)=\frac{1}{\sqrt 2} e^{iw}(0,ie^{-iu}\sin\sqrt 2 v,
ie^{-iu}\cos\sqrt 2 v)
$$
and $\Delta \varphi_{p}=2\varphi_{p}$, $\Delta
\varphi_{q}=4\varphi_{q}$. Thus $\varphi$ is a $2$-type submanifold
of $\mathbb{R}^6$ with eigenvalues $2$ and $4$. It is easy to verify
that $\varphi$ is a $[2,4]$-order immersion in $\mathbb{R}^6$.

Since the immersion $\phi$ has parallel mean curvature vector field,
one could ask weather its image arises by means of the product
composition property. Indeed, it can be proved that, up to an
orthogonal transformation of $\mathbb{R}^6$ which commutes with the
usual complex structure, $\phi$ covers twice the proper biharmonic
submanifold $\mathbb{S}^1(1/\sqrt 2)\times \mathbb{S}^1(1/2)\times
\mathbb{S}^1(1/2)\subset\mathbb{S}^5$.
\end{remark}

\subsection{Classification results}

Some of the techniques used in order to obtain non-existence results
in the case of non-positively curved space forms were adapted to the
study of proper biharmonic submanifolds in spheres. Thus, in order
to approach the classification problem for proper biharmonic
hypersurfaces in spheres, the study was divided according to the
number of principal curvatures. For submanifolds of higher
codimension, supplementary conditions on the mean curvature vector
field were imposed. This leaded to a series of rigidity results,
which we enumerate below.

\subsubsection{Proper biharmonic hypersurfaces}

First, if $M$ is a proper biharmonic umbilical hypersurface in
$\mathbb{S}^{m+1}$, i.e. all its principal curvatures are equal,
then it is an open part of $\mathbb{S}^m(1/\sqrt{2})$.

Afterwards, proper biharmonic hypersurfaces with at most two
distinct principal curvatures were considered.
\begin{theorem}[\cite{BMO1}]\label{th: curb_med_const_2_curv_princ}
Let $M$ be a hypersurface with at most two distinct principal
curvatures in $\mathbb{S}^{m+1}$. If $M$ is proper biharmonic in
$\mathbb{S}^{m+1}$, then it has constant mean curvature.
\end{theorem}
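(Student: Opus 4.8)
The plan is to prove that $f:=|H|$ is constant by contradiction. Assume $f$ is not constant, so that $U:=\{p\in M:\grad f(p)\neq 0\}$ is a non-empty open set. First I would observe that $f>0$ on $U$ (as $f\geq 0$, any zero of $f$ is a minimum, hence a critical point), so on $U$ we may write $H=f\eta$ for a unit normal $\eta$, take $A=A_\eta$ with $\trace A=mf>0$, and apply Theorem~\ref{th: bih subm S^n}(ii). Its second equation says $A(\grad f)=-\tfrac{m}{2}f\grad f$, so $e_1:=\grad f/|\grad f|$ is a principal direction on $U$ with principal curvature $k_1:=-\tfrac{m}{2}f<0$. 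Since $M$ has at most two distinct principal curvatures and is not umbilical on $U$ (an umbilical point would force $mk_1=\trace A=mf$, i.e.\ $f=0$), there are exactly two, $k_1$ with multiplicity $p\geq 1$ and $k_2$ with multiplicity $q=m-p$, and $\trace A=mf$ gives $qk_2=-(p+2)k_1$, so $k_2>0$.

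Next I would exploit the Codazzi equation of $M$ in $\mathbb{S}^{m+1}$, which (the ambient being a space form) reads $(\nabla_XA)Y=(\nabla_YA)X$. Using an orthonormal frame $e_1,e_2,\dots,e_m$ adapted to the eigendistributions $D_1$ (eigenvalue $k_1$, containing $e_1$) and $D_2$ (eigenvalue $k_2$): the $D_1$--$D_1$ component forces $e_1k_1=0$ whenever $p\geq 2$, which is impossible since $e_1k_1=-\tfrac{m}{2}|\grad f|\neq 0$ on $U$; hence $p=1$, $q=m-1$, and $k_2=\tfrac{3m}{2(m-1)}f$. The mixed $D_1$--$D_2$ component then yields $\nabla_{e_1}e_1=0$ (the integral curves of $e_1$ are geodesics of $M$) and $\langle\nabla_{e_a}e_1,e_b\rangle=\omega\,\delta_{ab}$ for $e_a,e_b\in D_2$, where $\omega=\dfrac{e_1k_2}{k_1-k_2}=-\dfrac{3f'}{(m+2)f}$ and $f':=e_1f=|\grad f|$; equivalently, the leaves of $D_2$ are totally umbilical in $M$ (and, for $m\geq 3$, the $D_2$--$D_2$ component shows $D_2$ is integrable).

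Now fix an integral curve $\gamma$ of $e_1$, parametrized by arclength, along which $f=f(s)$ and $k_1,k_2,|A|^2=k_1^2+(m-1)k_2^2$ and $\omega$ are explicit functions of $f$. The first biharmonic equation, which here becomes $\Delta f=(m-|A|^2)f$, turns (using $\Delta f=-f''-(m-1)\omega f'$) into the ODE
\[
f''=\frac{3(m-1)}{m+2}\,\frac{(f')^2}{f}-mf+\frac{m^2(m+8)}{4(m-1)}f^3 .
\]
On the other hand, the Gauss equation gives $\langle R^M(e_a,e_1)e_1,e_a\rangle=1+k_1k_2$ for all $e_a\in D_2$, so the (scalar) Riccati equation for the umbilical leaves along $\gamma$ is $\omega'=-\omega^2-(1+k_1k_2)$, which rewritten in $f$ is a second ODE
\[
f''=\frac{m+5}{m+2}\,\frac{(f')^2}{f}+\frac{m+2}{3}f-\frac{m^2(m+2)}{4(m-1)}f^3 .
\]
Subtracting the two ODEs eliminates $f''$ and expresses $(f')^2$ as an explicit polynomial $\alpha f^2-\gamma f^4$ in $f$; differentiating this and substituting the resulting $f''$ back into the first ODE gives a polynomial identity in $f$ whose coefficients of $f$ and $f^3$ cannot all match for $m\geq 2$ (for $m=4$ the $(f')^2$-term already drops out of the subtraction, forcing $f^2=1/4$ outright). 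Hence $f$ is constant along $\gamma$, contradicting $\grad f\neq 0$ on $U$; therefore $U=\emptyset$ and $f$ is constant on $M$.

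I expect the main obstacle to be the middle step: wringing from the Codazzi and Gauss equations the exact rigidity of the adapted frame --- that $p=1$, that the $e_1$-curves are geodesic, and that the $D_2$-leaves are umbilical with coefficient exactly $\omega=-3f'/((m+2)f)$ --- since this is what turns the biharmonic and curvature conditions into the two compatible second-order ODEs, and is where the sign conventions and the degenerate low dimensions ($m=2$ with a $1$-dimensional $D_2$, and $m=4$) must be watched. Once the two ODEs are in hand, the elimination producing the contradiction (incompatible coefficients, equivalently $f^2<0$) is routine.
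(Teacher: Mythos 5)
Your proposal is correct, and it follows essentially the same route as the proof of this theorem in the cited source \cite{BMO1} (the present survey states the result without proof): the tangential biharmonic equation makes $\grad|H|$ a principal direction with curvature $-\tfrac{m}{2}|H|$, Codazzi forces this curvature to be simple and the adapted frame to be rigid, and the normal biharmonic equation together with the Gauss equation yield two incompatible ODEs for $|H|$ along the integral curves of $\grad|H|/|\grad|H||$. The coefficients you give (e.g.\ $k_2=\tfrac{3m}{2(m-1)}|H|$, $\omega=-\tfrac{3|H|'}{(m+2)|H|}$, and both second-order ODEs) check out, as does the final elimination, including the separate treatment of $m=4$.
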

By using this result, the classification of such hypersurfaces was
obtained.
\begin{theorem}[\cite{BMO1}]\label{th: classif_hypersurf_2_curv_princ}
Let $M^m$ be a hypersurface with at most two distinct principal
curvatures in $\mathbb{S}^{m+1}$. Then $M$ is proper biharmonic if
and only if it is an open part of $\mathbb{S}^{m}(1/\sqrt{2})$ or of
$\mathbb{S}^{m_1}(1/\sqrt{2})\times \mathbb{S}^{m_2}(1/\sqrt{2})$,
$m_1+m_2=m$, $m_1\neq m_2$.
\end{theorem}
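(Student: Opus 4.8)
\emph{Proof proposal.} The plan is to reduce the statement to the classical description of hypersurfaces of $\mathbb{S}^{m+1}$ with at most two distinct \emph{constant} principal curvatures, and then to isolate the proper biharmonic examples among these by means of Theorem~\ref{th: bih subm S^n}. For the ``if'' implication there is nothing to do: by Subsection~\ref{sec:2.1}, both $\mathbb{S}^m(1/\sqrt2)$ and $\mathbb{S}^{m_1}(1/\sqrt2)\times\mathbb{S}^{m_2}(1/\sqrt2)$ with $m_1\neq m_2$ are proper biharmonic in $\mathbb{S}^{m+1}$, and they have, respectively, one and two distinct principal curvatures.

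For the ``only if'' implication, I would start from a proper biharmonic hypersurface $M$ with at most two distinct principal curvatures. If $M$ is totally umbilical, then it is an open part of $\mathbb{S}^m(1/\sqrt2)$, as recalled just before Theorem~\ref{th: curb_med_const_2_curv_princ}, and we are done. Otherwise $M$ has non-umbilical points, and the first thing I would invoke is Theorem~\ref{th: curb_med_const_2_curv_princ}, which gives that $M$ has non-zero constant mean curvature. Since a constant mean curvature hypersurface automatically has $\nabla^\perp H=0$ (the normal line bundle is flat), the second equation in \eqref{eq: caract_bih_hipersurf_spheres} is satisfied identically and the first one reduces to $|A|^2=m$, exactly as in Theorem~\ref{th: bih subm S^n}(ii). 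In particular, $\trace A$ (equal to $\pm m|H|$) and $|A|^2$ are \emph{globally} constant on $M$.

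Next I would upgrade this to isoparametricity. Around any non-umbilical point the two principal curvatures $k_1>k_2$ are smooth with locally constant multiplicities $r$ and $m-r$, and they satisfy the fixed system $rk_1+(m-r)k_2=\mathrm{const}$, $rk_1^2+(m-r)k_2^2=m$; geometrically they describe the intersection of a line with an ellipse, which has at most two solutions $(k_1,k_2)$, so $k_1$ and $k_2$ are locally constant. Running this over all components, and using that the two global constants above do not vary, the set of non-umbilical points turns out to be both open and closed, hence equal to $M$, so $M$ is isoparametric with exactly two distinct constant principal curvatures of constant multiplicities $m_1,m_2$, $m_1+m_2=m$. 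By the classical classification of such hypersurfaces (due to Cartan), $M$ is an open part of a standard product $\mathbb{S}^{m_1}(a_1)\times\mathbb{S}^{m_2}(a_2)\subset\mathbb{S}^{m+1}$, $a_1^2+a_2^2=1$. Finally I would substitute the values $|A|^2=(a_2/a_1)^2m_1+(a_1/a_2)^2m_2$ and $|H|=\frac{1}{a_1a_2m}\,|a_2^2m_1-a_1^2m_2|$ from Subsection~\ref{sec:2.1}: the condition $|A|^2=m$ becomes, after clearing denominators, a quadratic equation in $t=a_1^2$ with roots $t=\frac12$ and $t=m_1/m$; the root $t=m_1/m$ gives $|H|=0$ (the minimal Clifford torus, which is harmonic, hence not proper), so $t=\frac12$, that is $a_1=a_2=1/\sqrt2$, and then $|H|=|m_1-m_2|/m$, forcing $m_1\neq m_2$ by properness.

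The hard part, I expect, is the middle step — turning ``$|H|$ and $|A|^2$ constant'' into ``the principal curvatures are individually constant'' and then appealing to the isoparametric classification; once $M$ is known to be one of the two model hypersurfaces, eliminating the non-proper root is a one-line computation, and the ``if'' direction is immediate from Subsection~\ref{sec:2.1}.
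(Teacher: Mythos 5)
Your argument is correct and follows exactly the route this survey indicates for Theorem~\ref{th: classif_hypersurf_2_curv_princ} (the paper itself defers the detailed proof to \cite{BMO1}): first Theorem~\ref{th: curb_med_const_2_curv_princ} to get constant mean curvature, hence $|A|^2=m$ by Theorem~\ref{th: bih subm S^n}(ii), then constancy of the two principal curvatures from the two constant symmetric functions, the isoparametric classification, and finally the elimination of the minimal root $a_1^2=m_1/m$. The only step stated more tersely than it deserves is the open-and-closed argument for the non-umbilical set, but it is easily completed (e.g.\ at an umbilical limit point one would need $|H|=1$ and Cauchy--Schwarz applied to $\trace A$ and $|A|^2=m$ then forces umbilicity on a neighbourhood, contradicting the approach by non-umbilical points), so there is no genuine gap.
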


Then followed the case of biharmonic hypersurfaces with at most
three distinct principal curvatures. In order to solve this problem,
the following property of proper biharmonic hypersurfaces in spheres
was needed.

\begin{proposition}[\cite{BMO1}]\label{th: curb_scal_hyp}
Let $M$ be a proper biharmonic hypersurface with constant mean
curvature $|H|$ in $\mathbb{S}^{m+1}$, $m\geq 2$. Then $M$ has
positive constant scalar curvature $ s=m^2(1+|H|^2)-2m$.
\end{proposition}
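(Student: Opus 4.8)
The plan is to derive the scalar curvature formula directly from the Gauss equation, using the biharmonicity condition $|A|^2 = m$ supplied by Theorem~\ref{th: bih subm S^n}(ii). For a hypersurface $M^m$ in $\mathbb{S}^{m+1}$, the Gauss equation gives
\begin{equation*}
s = m(m-1) + (\trace A)^2 - |A|^2,
\end{equation*}
where I write $s$ for the scalar curvature and $\trace A$ for the (non-normalized) mean curvature, so that $\trace A = m|H|$ up to sign. Since $M$ has constant mean curvature $|H|$ and is proper biharmonic, Theorem~\ref{th: bih subm S^n}(ii) yields $|A|^2 = m$, which is in particular constant. Substituting $(\trace A)^2 = m^2|H|^2$ and $|A|^2 = m$ into the Gauss identity gives
\begin{equation*}
s = m(m-1) + m^2|H|^2 - m = m^2 - m + m^2|H|^2 - m = m^2(1+|H|^2) - 2m,
\end{equation*}
which is the claimed expression, and it is constant because both $m$ and $|H|$ are.

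It remains to check positivity, i.e. $m^2(1+|H|^2) - 2m > 0$, equivalently $m(1+|H|^2) > 2$. For $m \geq 3$ this is immediate since $m(1+|H|^2) \geq m \geq 3 > 2$. The only delicate case is $m = 2$, where one needs $2(1+|H|^2) > 2$, i.e. $|H| \neq 0$; but this holds precisely because $M$ is \emph{proper} biharmonic, so $H$ is nowhere zero and, being of constant norm, $|H| > 0$. (Indeed for $m=2$ one could also note $|A|^2 = 2$ forces the surface to be non-totally-geodesic, which again gives $|H|>0$ once one knows the mean curvature is constant.) Thus $s$ is a positive constant in all cases $m \geq 2$.

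The main point to be careful about is the bookkeeping of constants and conventions in the Gauss equation — whether $|A|^2$ denotes the squared norm of the shape operator (so $|A|^2 = \sum \kappa_i^2$ in terms of principal curvatures $\kappa_i$) and whether $s$ is the full scalar curvature rather than a normalized version. With the conventions implicit in Theorem~\ref{th: bih subm S^n} (where the biharmonicity condition reads $|A|^2 = m$ and the mean curvature vector satisfies $H = \frac{1}{m}(\trace A)\eta$ for a unit normal $\eta$), the computation above is forced, and there is no real obstacle: the result is essentially an algebraic consequence of $|A|^2 = m$ together with the ambient curvature being $+1$. The hypothesis $m \geq 2$ is used only to ensure positivity in the borderline case and to guarantee $M$ has well-defined scalar curvature as a genuine (non-point) manifold.
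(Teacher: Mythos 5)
Your proof is correct and follows the same route as the paper's source \cite{BMO1}: the survey states this proposition without proof, and the cited argument is precisely the combination of $|A|^2=m$ from Theorem \ref{th: bih subm S^n}(ii) with the traced Gauss equation $s=m(m-1)+m^2|H|^2-|A|^2$, positivity following from $|H|>0$. Your extra care about the $m=2$ case and the sign/normalization conventions is sound but does not change the argument.
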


First a non-existence result was obtained.
\begin{theorem}[\cite{BMO3}]\label{th: non-exist_isoparam_3princ}
There exist no compact proper biharmonic hypersurfaces of constant
mean curvature and with three distinct principal curvatures
everywhere in the unit Euclidean sphere.
\end{theorem}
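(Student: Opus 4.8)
The plan is to reduce the statement to a claim about isoparametric hypersurfaces, and then eliminate those by an elementary trigonometric computation. Let $M^{m}$ be such a hypersurface. Being proper biharmonic, $M$ is non-minimal, so its constant mean curvature satisfies $|H|>0$, and hence the last assertion of Theorem~\ref{th: bih subm S^n}~(ii) gives $|A|^{2}=m$. Together with $|H|=\mathrm{const}$, the value $|A|^{2}=m$, and the Gauss equation (equivalently, Proposition~\ref{th: curb_scal_hyp}), this shows that $M$ is a compact hypersurface of $\mathbb{S}^{m+1}$ with constant mean curvature, constant scalar curvature, and exactly three distinct principal curvatures $\lambda_{1},\lambda_{2},\lambda_{3}$ at every point. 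As $M$ is connected, the multiplicities $m_{1},m_{2},m_{3}$ are constant, and
$$
m_{1}\lambda_{1}+m_{2}\lambda_{2}+m_{3}\lambda_{3}=m|H|,\qquad m_{1}\lambda_{1}^{2}+m_{2}\lambda_{2}^{2}+m_{3}\lambda_{3}^{2}=m .
$$

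The heart of the proof is to show that $M$ is isoparametric, i.e.\ that $\lambda_{1},\lambda_{2},\lambda_{3}$ are constant. To do this I would work with the Codazzi equation in a local orthonormal frame adapted to the three smooth eigendistributions $T_{1},T_{2},T_{3}$: this is the classical device, going back to Cartan, which expresses the Levi-Civita connection of $M$ through the $\lambda_{i}$, shows that a principal curvature of multiplicity at least two is constant along its own distribution, and controls the derivatives of the $\lambda_{i}$ in the remaining directions. The extra information at our disposal — that $\trace A$ and $\trace A^{2}=|A|^{2}$ are constant — together with a Simons-type identity $\tfrac12\Delta|A|^{2}=|\nabla A|^{2}+P(\lambda_{1},\lambda_{2},\lambda_{3})$, whose left-hand side vanishes since $|A|^{2}=m$, would then be used (integrating over the compact manifold $M$) to force the elementary symmetric functions of $\lambda_{1},\lambda_{2},\lambda_{3}$ to be constant; as these are the coefficients of the monic minimal polynomial of $A$, which has degree three, the $\lambda_{i}$ are roots of a fixed cubic and hence constant. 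Alternatively, one may simply invoke a rigidity theorem stating that a compact hypersurface of a sphere with constant mean and scalar curvatures and three distinct principal curvatures is isoparametric. I expect this step — extracting constancy of the principal curvatures from only two scalar identities — to be the main obstacle, as it requires a careful analysis of the structure equations.

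Once $M$ is isoparametric with $g=3$ distinct principal curvatures, Cartan's classification forces $m_{1}=m_{2}=m_{3}=m/3$ and
$$
\lambda_{k}=\cot\!\Big(\theta+\frac{k\pi}{3}\Big),\qquad k=0,1,2,
$$
for some $\theta\in(0,\pi/3)$. Since $\cot 3\alpha=(x^{3}-3x)/(3x^{2}-1)$ for $x=\cot\alpha$, and $3\big(\theta+\tfrac{k\pi}{3}\big)=3\theta+k\pi$, the numbers $\lambda_{0},\lambda_{1},\lambda_{2}$ are precisely the three roots (distinct, since $\cot$ is injective on $(0,\pi)$) of $x^{3}-3cx^{2}-3x+c=0$, where $c=\cot 3\theta$. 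By Vieta's formulas, $\lambda_{0}+\lambda_{1}+\lambda_{2}=3c$ and $\lambda_{0}\lambda_{1}+\lambda_{0}\lambda_{2}+\lambda_{1}\lambda_{2}=-3$, whence $\lambda_{0}^{2}+\lambda_{1}^{2}+\lambda_{2}^{2}=9c^{2}+6\geq 6$. Therefore
$$
|A|^{2}=\frac{m}{3}\sum_{k=0}^{2}\lambda_{k}^{2}\ \geq\ 2m\ >\ m,
$$
contradicting $|A|^{2}=m$. Hence no compact proper biharmonic constant mean curvature hypersurface with three distinct principal curvatures exists in $\mathbb{S}^{m+1}$.
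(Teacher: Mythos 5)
Your proposal follows the same two-step route as the paper's: first reduce to the isoparametric case, then rule out $g=3$ isoparametric hypersurfaces using the explicit form of their principal curvatures. Your endgame is correct and complete: with $m_1=m_2=m_3=m/3$ and $\lambda_k=\cot(\theta+k\pi/3)$, the three curvatures are the roots of $x^3-3cx^2-3x+c=0$ with $c=\cot 3\theta$, so $\sum_k\lambda_k^2=9c^2+6\geq 6$ and $|A|^2\geq 2m>m$, contradicting $|A|^2=m$ from Theorem~\ref{th: bih subm S^n}~(ii). The one place where your argument is not self-contained is exactly the step you flag: isoparametricity. The direct mechanism you sketch would not work as stated --- constancy of $\trace A$ and $\trace A^2$ fixes only two of the three symmetric functions of $(\lambda_1,\lambda_2,\lambda_3)$, and the integrated Simons-type identity $\int_M\bigl(|\nabla A|^2+P\bigr)v_g=0$ forces $\nabla A=0$ only if $P$ has a definite sign, which it does not have here; establishing this rigidity requires a genuinely delicate analysis of the cubic form $\nabla A$ via Codazzi, not a two-line integration. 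So your proof really rests on the ``alternative'' you mention, namely the rigidity theorem that a compact hypersurface of $\mathbb{S}^{m+1}$ with constant mean curvature, constant scalar curvature and three distinct principal curvatures everywhere is isoparametric (the higher-dimensional analogue of \cite{CHA1}), the constant scalar curvature hypothesis being supplied by Proposition~\ref{th: curb_scal_hyp} as you note. That citation is precisely the external input the paper itself relies on, so, modulo it, your proof coincides with the paper's.
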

The proof relies on the fact that such hypersurfaces are
isoparametric, i.e. have constant principal curvatures with constant
multiplicities, and then, on the explicit expressions of the
principal curvatures.

We note that, in \cite{IIU}, the authors classified the
isoparametric proper biharmonic hypersurfaces in spheres.
\begin{theorem}[\cite{IIU}]
Let $M^m$ be an isoparametric hypersurface in $\mathbb{S}^{m+1}$.
Then $M$ is proper biharmonic if and only if it is an open part of
$\mathbb{S}^{m}(1/\sqrt{2})$ or of
$\mathbb{S}^{m_1}(1/\sqrt{2})\times \mathbb{S}^{m_2}(1/\sqrt{2})$,
$m_1+m_2=m$, $m_1\neq m_2$.
\end{theorem}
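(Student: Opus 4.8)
The plan is to combine the biharmonicity test of Theorem~\ref{th: bih subm S^n}~(ii) with the explicit description of the principal curvatures of an isoparametric hypersurface, thereby reducing everything to elementary trigonometry. An isoparametric hypersurface $M^m\subset\mathbb{S}^{m+1}$ has constant principal curvatures, hence constant mean curvature. If $|H|=0$ then $M$ is minimal, hence harmonic, hence \emph{not} proper biharmonic (for instance, when $g=2$ it is a minimal Clifford torus $\mathbb{S}^k(\sqrt{k/m})\times\mathbb{S}^{m-k}(\sqrt{(m-k)/m})$, which is not one of the two listed hypersurfaces); so I may assume $|H|\neq0$. Then Theorem~\ref{th: bih subm S^n}~(ii) tells us that $M$ is proper biharmonic if and only if $|A|^2=m$, and the whole problem becomes: for which isoparametric hypersurfaces does $|A|^2=m$ hold together with $|H|\neq0$?

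To attack this I would invoke Münzner's structure theory. The number $g$ of distinct principal curvatures is one of $1,2,3,4,6$; the principal curvatures are $\cot\theta_\alpha$ with $\theta_\alpha=\theta+\tfrac{(\alpha-1)\pi}{g}$, $\alpha=1,\dots,g$, for some $\theta\in(0,\pi/g)$; and the multiplicities satisfy $m_\alpha=m_{\alpha+2}$ (indices mod $g$). Thus if $g$ is odd all $g$ multiplicities equal $k=m/g$, while if $g=2h$ is even the odd- and the even-indexed multiplicities take two (possibly equal) values $p,q$ with $m=h(p+q)$. The computational engine is the pair of classical identities
\[
\sum_{j=0}^{h-1}\cot\!\Bigl(x+\tfrac{j\pi}{h}\Bigr)=h\cot(hx),\qquad
\sum_{j=0}^{h-1}\csc^2\!\Bigl(x+\tfrac{j\pi}{h}\Bigr)=h^2\csc^2(hx),
\]
which I would apply to $\trace A=\sum_\alpha m_\alpha\cot\theta_\alpha$ and to $|A|^2=\sum_\alpha m_\alpha\cot^2\theta_\alpha=\sum_\alpha m_\alpha(\csc^2\theta_\alpha-1)$. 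For $g$ odd the angles $\{\theta_\alpha\}$ form a progression of step $\pi/g$, so (using $kg=m$) one gets $|A|^2=m\bigl(g\csc^2(g\theta)-1\bigr)$, and $|A|^2=m$ forces $\sin^2(g\theta)=g/2$, impossible for $g\geq3$. Hence $g=1$ and $\cot^2\theta=1$: the single principal curvature has absolute value $1$, so $M$ is an open part of $\mathbb{S}^m(1/\sqrt2)$, where indeed $|H|=1\neq0$.

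For $g=2h$ even, the odd- and even-indexed angles each form an $h$-term progression of step $\pi/h$, the even one shifted by $\pi/(2h)$, so $\csc^2(h\theta)$ becomes $\sec^2(h\theta)$ on the even block and $|A|^2=h^2\bigl(p\csc^2(h\theta)+q\sec^2(h\theta)\bigr)-h(p+q)$; hence $|A|^2=m$ is equivalent to $h\bigl(p\csc^2(h\theta)+q\sec^2(h\theta)\bigr)=2(p+q)$. Since $\csc^2\geq1$ and $\sec^2\geq1$, the left-hand side is at least $h(p+q)$, which already forces $h\leq2$ (so $g=6$ is out); and for $g=4$, i.e. $h=2$, equality in $p\csc^2(2\theta)+q\sec^2(2\theta)=p+q$ would require $\csc^2(2\theta)=\sec^2(2\theta)=1$ simultaneously, which is impossible — so $g=4$ is excluded too. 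There remains $g=2$: writing $u=\sin^2\theta\in(0,1)$, which is $a_1^2$ (the radii being $a_1=\sqrt u$, $a_2=\sqrt{1-u}$), the relation $p\,u^{-1}+q\,(1-u)^{-1}=2(p+q)$ is a quadratic in $u$ with roots $u=\tfrac12$ and $u=\tfrac{p}{p+q}$. By the $\cot$-identity the second root is exactly the locus $\trace A=0$ — a minimal Clifford torus — so only $u=\tfrac12$ survives, giving $a_1=a_2=1/\sqrt2$ and $|H|=\tfrac{|m_2-m_1|}{m_1+m_2}$, which is nonzero precisely when $m_1\neq m_2$. Thus $M$ is an open part of $\mathbb{S}^{m_1}(1/\sqrt2)\times\mathbb{S}^{m_2}(1/\sqrt2)$ with $m_1\neq m_2$. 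Conversely, both listed families are isoparametric and were already shown to be proper biharmonic in Subsection~\ref{sec:2.1}, which finishes the proof.

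The only genuinely delicate part — the step I expect to be the main obstacle — is the bookkeeping that matches Münzner's abstract angle parameter $\theta$ with the concrete radii of $\mathbb{S}^m(a)$ and $\mathbb{S}^{m_1}(a_1)\times\mathbb{S}^{m_2}(a_2)$, together with the need to keep the minimal Clifford torus (which also satisfies $|A|^2=m$) carefully distinct from the proper biharmonic one. Once the two trigonometric identities above are recorded, the elimination of $g=3,4,6$ via $\csc^2\geq1$, $\sec^2\geq1$ and the treatment of $g=1,2$ are short computations.
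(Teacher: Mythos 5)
Your argument is correct and is essentially the expected proof of this result: the paper itself gives no proof (it only states the theorem with a citation to \cite{IIU}), but the route it sketches for the related Theorem~\ref{th: non-exist_isoparam_3princ} --- reduce to the condition $|A|^2=m$ via Theorem~\ref{th: bih subm S^n}~(ii) and then exploit the explicit M\"unzner form of the principal curvatures --- is exactly yours, and your trigonometric eliminations of $g=3,4,6$ and the analysis of $g=1,2$ (including discarding the minimal Clifford root $u=p/(p+q)$) all check out. The only step left implicit is Cartan's rigidity for $g\le 2$ (a totally umbilical hypersurface of the sphere is an open part of a small hypersphere, and a $g=2$ isoparametric hypersurface is an open part of a product of spheres), which you need in order to pass from the computed principal curvatures to the assertion that $M$ is an open part of one of the two listed models; this is classical and harmless.
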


Compact proper biharmonic hypersurfaces in $\mathbb{S}^4$ were fully
classified.
\begin{theorem}[\cite{BMO3}]\label{th: hyper_S4}
The only compact proper biharmonic hypersurfaces in $\mathbb{S}^4$
are the hypersphere $\mathbb{S}^3(1/\sqrt{2})$ and the torus
$\mathbb{S}^1(1/\sqrt{2})\times\mathbb{S}^2(1/\sqrt{2})$.
\end{theorem}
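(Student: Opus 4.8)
The plan is to reduce the classification of compact proper biharmonic hypersurfaces in $\mathbb{S}^4$ to the already-established theorems on hypersurfaces with at most two, and with three, distinct principal curvatures. Since $M^3 \subset \mathbb{S}^4$ is a hypersurface, at each point it has at most $3$ distinct principal curvatures. The set $U$ where exactly $3$ distinct principal curvatures occur is open. So the argument splits according to whether $U$ is empty or not.

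First I would dispose of the case $U = \emptyset$: then $M$ has at most two distinct principal curvatures everywhere, so Theorem~\ref{th: curb_med_const_2_curv_princ} gives that $M$ has constant mean curvature, and Theorem~\ref{th: classif_hypersurf_2_curv_princ} then identifies $M$ (being compact and connected, hence not merely an ``open part'') with $\mathbb{S}^3(1/\sqrt 2)$ or $\mathbb{S}^1(1/\sqrt 2)\times\mathbb{S}^2(1/\sqrt 2)$. The harder case is $U \neq \emptyset$. Here I would want to show this cannot occur for a compact $M$. The natural route is to first establish that $M$ has constant mean curvature; then on $U$ we have a CMC proper biharmonic hypersurface with three distinct principal curvatures everywhere (on $U$), and one argues as in Theorem~\ref{th: non-exist_isoparam_3princ} that $U$ (or its closure) is isoparametric, forcing $U = M$ by real-analyticity of isoparametric data, and finally invoking the non-existence result Theorem~\ref{th: non-exist_isoparam_3princ} to get a contradiction with compactness.

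The main obstacle is the constancy of the mean curvature when three distinct principal curvatures are present on an open set. Theorem~\ref{th: curb_med_const_2_curv_princ} only covers the at-most-two-curvature case, so on $U$ one cannot cite it directly. The strategy I would use is: on the complement of $U$, $M$ has at most two distinct principal curvatures, and if that complement has nonempty interior, Theorem~\ref{th: curb_med_const_2_curv_princ} applied on each component shows $|H|$ is locally constant there; one then propagates constancy across $\partial U$ by continuity and shows, using the biharmonic system \eqref{eq: caract_bih_hipersurf_spheres} (in particular the second equation $2A(\grad|H|) + m|H|\grad|H| = 0$ together with the structure of the shape operator where three curvatures occur), that $\grad|H| = 0$ on $U$ as well; alternatively, if $U$ is dense, one works directly on $U$ with the full biharmonic equations and the techniques behind Theorem~\ref{th: non-exist_isoparam_3princ}. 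Once $|H|$ is a global constant, Proposition~\ref{th: curb_scal_hyp} gives constant scalar curvature, $M|_U$ becomes isoparametric by the Cartan-type argument underlying Theorem~\ref{th: non-exist_isoparam_3princ}, hence $M$ itself is isoparametric with three distinct principal curvatures, which Theorem~\ref{th: non-exist_isoparam_3princ} rules out on a compact manifold. This contradiction leaves only $U = \emptyset$, and we are done.

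I expect the delicate point to be the gluing/propagation of constant mean curvature across the boundary between the region with $\leq 2$ principal curvatures and the region with exactly $3$: the multiplicities of the principal curvatures can jump along $\partial U$, so the eigenvalue functions need not be smooth there, and care is required to ensure $|H|$ stays constant. In the original source \cite{BMO3} this is presumably handled by a combination of the continuity of $|H|$, the real-analyticity of $M$ (which a proper biharmonic submanifold of a sphere inherits, since it satisfies an elliptic analytic system), and the explicit classification of isoparametric hypersurfaces in $\mathbb{S}^4$ having three distinct principal curvatures (the Cartan hypersurface $SO(3)/(\mathbb{Z}_2\times\mathbb{Z}_2)$), whose principal curvatures are known explicitly and can be checked to violate the biharmonicity condition $|A|^2 = m = 3$ together with the first equation of \eqref{eq: caract_bih_hipersurf_spheres}.
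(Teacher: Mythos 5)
Your overall skeleton --- reduce to constant mean curvature, deduce constant scalar curvature from Proposition~\ref{th: curb_scal_hyp}, conclude that $M$ is isoparametric, and then invoke the known classifications --- matches the route the paper indicates (``the proof uses the fact that a proper biharmonic hypersurface in $\mathbb{S}^4$ has constant mean curvature, and thus constant scalar curvature, and a result in \cite{CHA1}''). However, your proposal leaves a genuine gap at precisely the step you yourself flag as the main obstacle: proving that $|H|$ is constant on the open set $U$ where three distinct principal curvatures occur. The propagation mechanism you describe does not work. First, the complement of $U$ may have empty interior, in which case there is nothing to propagate from. Second, even where $U$ borders a region to which Theorem~\ref{th: curb_med_const_2_curv_princ} applies, continuity only transfers the boundary value of $|H|$, not its constancy inside $U$; and on $U$ the tangential equation in \eqref{eq: caract_bih_hipersurf_spheres} says only that, wherever $\grad|H|\neq 0$, it is a principal direction with principal curvature $-\frac{m}{2}|H|$ --- this is perfectly compatible with nonconstant $|H|$ and does not by itself give $\grad|H|=0$. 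Closing this gap is the actual technical content of \cite{BMO3}: it requires a dedicated analysis of the Codazzi equations (connection forms) in the three-curvature region, which your proposal does not supply.

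A second, smaller inaccuracy: the isoparametricity is not obtained by a ``local Cartan-type argument'' on $U$, nor by real-analytic continuation of isoparametric data. The paper relies on Chang's theorem \cite{CHA1}, that a \emph{closed} hypersurface of $\mathbb{S}^4$ with constant mean curvature and constant scalar curvature is isoparametric; this is a global result requiring compactness and both constancies on all of $M$, so it can only be invoked after constancy of $|H|$ has been established everywhere, and it cannot be run on the open set $U$ alone. Once constancy of $|H|$ is in place, the conclusion does follow as you say, via Chang's theorem together with Theorem~\ref{th: non-exist_isoparam_3princ} and Theorem~\ref{th: classif_hypersurf_2_curv_princ} (or the isoparametric classification).
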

The proof uses the fact that a proper biharmonic hypersurface in
$\mathbb{S}^4$ has constant mean curvature, and thus constant scalar
curvature, and a result in \cite{CHA1}.

\subsubsection{Proper biharmonic submanifolds of
codimension higher than one}

In higher codimension, it was proved that the proper biharmonic
pseudo-umbilical submanifolds, of dimension different from four, in
spheres have constant mean curvature. This result leaded to the
classification of proper biharmonic pseudo-umbilical submanifolds of
codimension two.
\begin{theorem}[\cite{BMO1}]\label{th: classif_pseudo_umb_codim2}
Let $M^m$ be a pseudo-umbilical submanifold in $\mathbb{S}^{m+2}$,
$m\neq 4$. Then $M$ is proper biharmonic in $\mathbb{S}^{m+2}$ if
and only if it is minimal in $\mathbb{S}^{m+1}(1/\sqrt{2})$.
\end{theorem}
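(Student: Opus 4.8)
The plan is as follows. The ``if'' part requires no work: if $M$ is minimal in $\mathbb{S}^{m+1}(1/\sqrt2)\subset\mathbb{S}^{m+2}$, then Theorem~\ref{th: rm_minim} (applied with $n=m+2$) says that $M$ is proper biharmonic in $\mathbb{S}^{m+2}$, while part (ii) of the Remark following that theorem says that $M$ is automatically pseudo-umbilical. So the content is entirely in the converse. The first step there is to invoke the result recalled just above (see \cite{BMO1}): a proper biharmonic pseudo-umbilical submanifold of dimension $m\neq4$ in a sphere has constant mean curvature. Hence we may assume from now on that $|H|$ is a positive constant; this is the only place where the hypothesis $m\neq4$ enters, and in the sequel we only use $m\geq2$.

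The heart of the argument is to upgrade ``constant mean curvature'' to ``parallel mean curvature vector field'', and here I would use crucially that the codimension is two. Put $E=H/|H|$ and complete it to a local orthonormal frame $\{E,E_2\}$ of the normal bundle, and write $\nabla^\perp_X E=\omega(X)E_2$ for a $1$-form $\omega$, so that $\nabla^\perp H\equiv0$ is equivalent to $\omega\equiv0$. Pseudo-umbilicality gives $A_E=|H|\,\Id$, and $\langle H,E_2\rangle=0$ forces $A_{E_2}$ to be trace-free. Plugging $B(\cdot,\cdot)=|H|\langle\cdot,\cdot\rangle E+\langle A_{E_2}\cdot,\cdot\rangle E_2$ and the constancy of $|H|$ into the Codazzi equation of $M$ in $\mathbb{S}^{m+2}$ (a space of constant curvature), the $E$-component collapses to $\langle(\nabla^\perp_XB)(Y,Z),E\rangle=-\omega(X)\langle A_{E_2}Y,Z\rangle$, and the symmetry of $\nabla B$ then yields
\[
\omega(X)\,A_{E_2}Y=\omega(Y)\,A_{E_2}X \qquad \text{for all } X,Y\in TM .
\]
This is where I expect the main obstacle to lie: this identity, combined with $\trace A_{E_2}=0$, should force $A_{E_2}=0$ at every point where $\omega\neq0$. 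Granting this, on the open set $U=\{\,\omega\neq0\,\}$ one gets $A_{E_2}\equiv0$, hence $B(X,Y)=\langle X,Y\rangle H$ on $U$, i.e.\ $M$ is totally umbilical there; feeding this umbilical second fundamental form back into Codazzi gives $\nabla^\perp H=0$ on $U$, that is $\omega\equiv0$ on $U$ --- a contradiction unless $U=\emptyset$. Thus $\nabla^\perp H\equiv0$ on $M$.

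It remains to identify $M$. Since $H$ is now parallel and $M$ is biharmonic, Theorem~\ref{th: bih subm S^n}(i) gives $\trace B(\cdot,A_H(\cdot))=mH$, which by pseudo-umbilicality reads $|H|^2mH=mH$; as $H\neq0$ this forces $|H|=1$, hence $A_E=\Id$. Then $E$ is a parallel unit normal with $\nabla^{\mathbb{S}^{m+2}}_X E=-A_EX+\nabla^\perp_X E=-X$. Denoting by $\varphi$ the position vector field of $M$ in $\mathbb{R}^{m+3}$ and differentiating in the flat connection $\overline{\nabla}$ of $\mathbb{R}^{m+3}$, one finds $\overline{\nabla}_X(\varphi+E)=X-X=0$, so $\varphi+E\equiv c$ for a fixed vector $c$ with $|c|^2=2$ and $\langle\varphi,c\rangle\equiv1$; equivalently $M\subset\mathbb{S}^{m+2}\cap\{\langle\cdot,c\rangle=1\}=\mathbb{S}^{m+1}(1/\sqrt2)$, a totally umbilical hypersurface of $\mathbb{S}^{m+2}$ whose unit normal at $\varphi$ is exactly $E$. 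A short computation of how the second fundamental forms of $M\subset\mathbb{S}^{m+1}(1/\sqrt2)\subset\mathbb{S}^{m+2}$ are related --- the middle inclusion being umbilical with shape operator $+\Id$ in the direction $E$ because $|H|=1$ --- then shows that the mean curvature vector of $M$ in $\mathbb{S}^{m+1}(1/\sqrt2)$ equals $H-E=0$, i.e.\ $M$ is minimal in $\mathbb{S}^{m+1}(1/\sqrt2)$, as desired.
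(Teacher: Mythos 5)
The survey itself gives no proof of this theorem --- it is quoted from \cite{BMO1} --- so the comparison can only be with the route the paper indicates just above the statement: first the constant mean curvature result for proper biharmonic pseudo-umbilical submanifolds of dimension $m\neq 4$, then the codimension-two classification. Your proposal follows exactly that route and is correct; in \cite{BMO1} the passage from constant to parallel mean curvature in codimension two is handled by citing a classical lemma of Chen, which is precisely the Codazzi argument you reconstruct. The one step you leave as ``granting this'' is not in fact an obstacle: at a point where $\omega\neq0$, applying the identity $\omega(X)A_{E_2}Y=\omega(Y)A_{E_2}X$ with $Y\in\ker\omega$ (nontrivial for $m\geq2$) gives $A_{E_2}=0$ on the hyperplane $\ker\omega$; since $A_{E_2}$ is symmetric, its image is then contained in $(\ker\omega)^{\perp}=\mathbb{R}\,\omega^{\sharp}$, so $A_{E_2}$ has at most one nonzero eigenvalue, which $\trace A_{E_2}=m\langle H,E_2\rangle=0$ kills. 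Two remarks on economy: once you have $\nabla^{\perp}H=0$, the biharmonic equation plus pseudo-umbilicality gives $|H|=1$ as you say, and at that point you could simply invoke Theorem~\ref{th: classif_bih const mean} to conclude that $M$ is minimal in $\mathbb{S}^{m+1}(1/\sqrt{2})$ instead of redoing the position-vector argument (which is nevertheless correct as written); and your identification of where $m\neq4$ enters --- only in the constant mean curvature step --- agrees with the paper.
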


Surfaces with parallel mean curvature vector field in $\mathbb{S}^n$
were also investigated.
\begin{theorem}[\cite{BMO1}]\label{th: classif_surf_parallelH}
Let $M^2$ be a surface with parallel mean curvature vector field in
$\mathbb{S}^n$. Then $M$ is proper biharmonic in $\mathbb{S}^n$ if
and only if it is minimal in $\mathbb{S}^{n-1}(1/\sqrt{2})$.
\end{theorem}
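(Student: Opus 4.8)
The plan is to prove the two implications separately, the converse being immediate and the direct one carrying all the weight. If $M^2$ is minimal in $\mathbb{S}^{n-1}(1/\sqrt{2})\subset\mathbb{S}^n$, then by the composition property, Theorem~\ref{th: rm_minim} applied with $a=1/\sqrt 2$, it is proper biharmonic in $\mathbb{S}^n$, so nothing further is needed there. For the converse, assume $M^2$ has parallel mean curvature vector field and is proper biharmonic in $\mathbb{S}^n$. Since $\nabla^\perp H=0$, the second equation in \eqref{caract_bih_spheres} holds trivially, and by Theorem~\ref{th: bih subm S^n}(i) the first one becomes $\trace B(\cdot,A_H(\cdot))=2H$. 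Taking the inner product with $H$ and then the trace over $M$ yields the scalar relations $|A_H|^2=2|H|^2$ and $\trace A_H=2|H|^2$; moreover $|H|$ is a positive constant, $M$ being proper biharmonic with $\nabla^\perp H=0$.

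The Cauchy--Schwarz inequality $|A_H|^2\ge\tfrac12(\trace A_H)^2$ now reads $|H|^2(1-|H|^2)\ge 0$, hence $0<|H|\le 1$, with equality if and only if $A_H$ is a multiple of $\Id$; combined with the two relations this forces $A_H=\Id$, that is, $M$ pseudo-umbilical with $|H|=1$. So there are exactly two alternatives. In the first, $|H|=1$ and $M$ is a pseudo-umbilical surface with parallel mean curvature vector field in $\mathbb{S}^n$; by the classical structure theorem for such submanifolds it is an open part of a minimal surface of a small hypersphere $\mathbb{S}^{n-1}(r)\subset\mathbb{S}^n$, and Theorem~\ref{th: rm_minim} forces $r=1/\sqrt 2$, which is the asserted conclusion.

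It remains to exclude the second alternative, $0<|H|<1$, in which $A_H$ has two distinct eigenvalues, so $M$ is genuinely non-pseudo-umbilical. By the reduction-of-codimension theorem for surfaces with parallel mean curvature vector field in spheres, such an $M$ lies fully either in a totally geodesic $\mathbb{S}^3\subset\mathbb{S}^n$ or in a $4$-dimensional totally umbilical submanifold $\mathbb{S}^4(\rho)\subset\mathbb{S}^n$. If $M\subset\mathbb{S}^3$ is totally geodesically embedded, its second fundamental form in $\mathbb{S}^n$ agrees with that in $\mathbb{S}^3$, so $M$ is proper biharmonic in $\mathbb{S}^3$; being a surface it has at most two distinct principal curvatures, and Theorem~\ref{th: classif_hypersurf_2_curv_princ} then forces $M$ to be an open part of $\mathbb{S}^2(1/\sqrt{2})$ (the torus $\mathbb{S}^{m_1}(1/\sqrt{2})\times\mathbb{S}^{m_2}(1/\sqrt{2})$ with $m_1+m_2=2$ is excluded, as it would require $m_1\ne m_2$), which is pseudo-umbilical with $|H|=1$, contradicting $|H|<1$. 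In the case $M\subset\mathbb{S}^4(\rho)$ one rewrites the biharmonic system in terms of the geometry of $M$ inside $\mathbb{S}^4(\rho)$ and combines it with the explicit classification of surfaces with parallel mean curvature vector in $4$-dimensional space forms, invoking the codimension-two result Theorem~\ref{th: classif_pseudo_umb_codim2} (applicable since $m=2\ne4$) for the pseudo-umbilical sub-case; in every instance the surfaces that turn out to be proper biharmonic in $\mathbb{S}^n$ are pseudo-umbilical with $|H|=1$, again a contradiction.

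The routine part of the argument is the extraction of the trace identities $|A_H|^2=2|H|^2=\trace A_H$ and the ensuing dichotomy. The main obstacle is the case $0<|H|<1$, and within it the sub-case of a surface sitting essentially in a $4$-dimensional totally umbilical submanifold: there one cannot appeal to the hypersurface classification directly and must carefully play the biharmonic equations against the Gauss and Ricci equations and the explicit list of parallel-mean-curvature surfaces in $\mathbb{S}^4$ in order to rule out all genuinely higher-codimensional, non-pseudo-umbilical candidates. Everything else reduces to results already at our disposal, namely Theorems~\ref{th: rm_minim}, \ref{th: bih subm S^n}, \ref{th: classif_hypersurf_2_curv_princ} and \ref{th: classif_pseudo_umb_codim2}.
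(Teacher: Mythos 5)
This theorem is only quoted in the survey (from \cite{BMO1}); the paper contains no proof of it, so your argument has to stand on its own. The easy half and the opening reductions are fine: the converse follows from Theorem~\ref{th: rm_minim} (together with Remark~2.4(ii), which guarantees that minimal surfaces of $\mathbb{S}^{n-1}(1/\sqrt 2)$ do have parallel mean curvature vector field), and your trace identities $|A_H|^2=2|H|^2$, $\trace A_H=2|H|^2$ plus Cauchy--Schwarz correctly yield the dichotomy ``$|H|=1$ and $A_H=\Id$ (pseudo-umbilical)'' versus ``$0<|H|<1$ and $M$ not pseudo-umbilical''; this is exactly Proposition~\ref{prop: prop||MC}(i) combined with Theorem~\ref{th: classif_bih const mean}. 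The $|H|=1$ branch, via the structure theorem for pseudo-umbilical submanifolds with parallel mean curvature vector and Theorem~\ref{th: rm_minim}, is also sound.

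The genuine gap is the branch $0<|H|<1$, which is where the entire content of the theorem lies, and your treatment of it does not constitute a proof. First, the reduction-of-codimension result you invoke is misquoted: for a non-pseudo-umbilical surface with parallel mean curvature vector field in $\mathbb{S}^n$, Yau's theorem places $M$ in a $3$-dimensional \emph{totally umbilical} submanifold $\mathbb{S}^3(\rho)\subset\mathbb{S}^n$ with $\rho\in(0,1]$, not in a totally geodesic $\mathbb{S}^3$ or a $4$-dimensional umbilical $\mathbb{S}^4(\rho)$. Your argument via Theorem~\ref{th: classif_hypersurf_2_curv_princ} only covers $\rho=1$; for $\rho<1$ the sphere $\mathbb{S}^3(\rho)$ is not totally geodesic, biharmonicity in $\mathbb{S}^n$ does not pass to biharmonicity in $\mathbb{S}^3(\rho)$ by restriction of the second fundamental form, and one must actually rewrite \eqref{caract_bih_spheres} for a submanifold of an umbilical submanifold and run the Gauss--Codazzi--Ricci analysis. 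Second, the remaining ``$\mathbb{S}^4(\rho)$'' case is dismissed with the assertion that ``in every instance'' the candidates come out pseudo-umbilical with $|H|=1$ --- that is precisely the statement to be proved, not an argument; and appealing to Theorem~\ref{th: classif_pseudo_umb_codim2} ``for the pseudo-umbilical sub-case'' is vacuous here, since in this branch $M$ is non-pseudo-umbilical by construction. Until the non-pseudo-umbilical, $\rho\le 1$ case is excluded by an explicit computation (this is the technical core of the proof in \cite{BMO1}), the forward implication is not established.
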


The above two results allowed the classification of proper
biharmonic constant mean curvature surfaces in $\mathbb{S}^4$.

\begin{theorem}[\cite{BO}]
The only proper biharmonic constant mean curvature surfaces in
$\mathbb{S}^4$ are the minimal surfaces in
$\mathbb{S}^3(1/\sqrt{2})$.
\end{theorem}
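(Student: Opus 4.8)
The plan is to deduce the theorem from Theorem~\ref{th: classif_surf_parallelH} by showing that a proper biharmonic surface $M^2\subset\mathbb{S}^4$ of constant mean curvature must in fact have \emph{parallel} mean curvature vector field. The reverse inclusion is immediate: a minimal surface of $\mathbb{S}^3(1/\sqrt2)\subset\mathbb{S}^4$ is proper biharmonic by Theorem~\ref{th: rm_minim}, and by the remark following that theorem it has $|H|=1$, so it is a proper biharmonic surface of constant mean curvature.

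For the direct implication, let $M^2\subset\mathbb{S}^4$ be proper biharmonic with $|H|$ constant; since $M$ is not minimal, $|H|>0$ everywhere. I would work locally, fix an orthonormal normal frame $\{e_3=H/|H|,\,e_4\}$, and write $\nabla^\perp_X e_3=\beta(X)e_4$ for the connection $1$-form $\beta$ of the normal bundle, so that $\nabla^\perp_X H=|H|\beta(X)e_4$. Substituting this into the second equation of \eqref{caract_bih_spheres} — whose gradient term vanishes because $|H|$ is constant — should give, with $A_\alpha:=A_{e_\alpha}$,
\[
A_4(\beta^\sharp)=0 .
\]
The crux is to rule out the open ``twisted'' set $U=\{p\in M:\beta_p\neq 0\}$. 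On $U$ the operator $A_4$ has nontrivial kernel, so $\det A_4=0$; since also $\trace A_4=m\langle H,e_4\rangle=0$, and a self-adjoint endomorphism of a $2$-plane with zero trace and zero determinant is zero, we get $A_4\equiv 0$ on $U$. Feeding $A_4=0$ into the Codazzi equation of $M$ in $\mathbb{S}^4$ and projecting onto $e_4$ should yield $\beta(X)A_3Y=\beta(Y)A_3X$, and choosing $Y$ to span $\ker\beta$ then shows that $A_3$ annihilates $\ker\beta$; so on $U$ the operator $A_3$ has rank one, with eigenvalues the distinct constants $2|H|$ and $0$. Projecting the same Codazzi equation onto $e_3$ (again using $A_4=0$) should show that $A_3$ is a Codazzi tensor on $U$, and the standard computation in the (locally smooth) eigenframe $\{u,v\}$ — write $\nabla_u u=\alpha v$, $\nabla_v v=-\gamma u$ and impose $(\nabla_u A_3)v=(\nabla_v A_3)u$ — forces $\alpha=\gamma=0$. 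Hence $\{u,v\}$ is parallel and $K_M\equiv 0$ on $U$, contradicting the Gauss equation $K_M=1+\det A_3+\det A_4=1$. Therefore $U=\emptyset$, i.e.\ $\nabla^\perp H=0$, and Theorem~\ref{th: classif_surf_parallelH} with $n=4$ then gives that $M$ is minimal in $\mathbb{S}^3(1/\sqrt2)$.

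I expect the elimination of the twisted set $U$ to be the only real obstacle, and what makes it go through is the low codimension: for a surface in $\mathbb{S}^4$ the normal bundle has rank two, so a trace-free, zero-determinant component of the second fundamental form is forced to vanish. This fails in higher codimension — the proper biharmonic Legendre tori of $\mathbb{S}^5$ in Theorem~\ref{th:Chen_T^2 in S^5} are genuine constant mean curvature examples with non-parallel mean curvature vector field — which is why the classification here is special to $\mathbb{S}^4$. One could instead split the argument into the pseudo-umbilical case (handled by Theorem~\ref{th: classif_pseudo_umb_codim2}, applicable since $m=2\neq 4$) and the parallel mean curvature case, but the route above seems the most economical; in either case I would watch where the constant mean curvature hypothesis is genuinely used, namely to kill the $\grad|H|^2$ term and to let $2|H|$ be treated as a constant in the Codazzi-tensor step.
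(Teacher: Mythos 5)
Your argument is correct and follows the same overall strategy as the paper's proof: both reduce the statement to Theorem \ref{th: classif_surf_parallelH} by showing $\nabla^\perp H=0$, and both extract the vanishing of the shape operator in the normal direction orthogonal to $H$ from the tangential part of \eqref{caract_bih_spheres} (your zero-trace, zero-determinant observation for a self-adjoint operator on a $2$-plane is exactly what makes this work), before turning to Gauss and Codazzi. The one genuine structural difference is how the pseudo-umbilical possibility is handled. The paper splits into two cases: if the shape operator along $H/|H|$ equals $|H|\Id$, it invokes the classification of proper biharmonic pseudo-umbilical submanifolds of codimension two (Theorem \ref{th: classif_pseudo_umb_codim2}, applicable since $2\neq 4$) to force $\nabla^\perp H=0$ and reach a contradiction, and only in the remaining case does it run the Gauss--Codazzi argument. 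You instead note that on the open set $U=\{\nabla^\perp H\neq 0\}$ the $e_4$-component of the Codazzi equation already forces $A_3$ to annihilate $\ker\beta$, so $A_3$ has rank one there with distinct constant eigenvalues $0$ and $2|H|$, and the pseudo-umbilical alternative simply cannot occur on $U$; the Codazzi-tensor computation in the eigenframe and the Gauss equation ($K=0$ versus $K=1+\det A_3+\det A_4=1$) then finish uniformly. This buys two small things: your proof is independent of Theorem \ref{th: classif_pseudo_umb_codim2}, and working pointwise on $U$ avoids the slight imprecision of a global dichotomy between $A_3=|H|\Id$ and $A_3\neq|H|\Id$. Your closing diagnosis of why the argument is confined to codimension two, with the Legendre tori of $\mathbb{S}^5$ as the obstruction in higher codimension, is also exactly right.
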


\begin{proof}
The key of the proof is to show that $\nabla^\perp H=0$, in order to
be able to apply Theorem \ref{th: classif_surf_parallelH}.

We assume that $\nabla^\perp H\neq 0$ and consider $\{E_1,E_2\}$
tangent to $M$ and $\{E_3, E_4=\frac{H}{|H|}\}$ normal to $M$, such
that $\{E_1,E_2,E_3,E_4\}$ constitutes a local orthonormal frame
field on $\mathbb{S}^4$. Using the connection $1$-forms w.r.t.
$\{E_1,E_2,E_3,E_4\}$ and the tangent part of the biharmonic
equation \eqref{caract_bih_spheres}, we get $A_4=0$, where $A_4$ is
the shape operator in direction of $E_4$.
Then we identify two cases:\\
(i) If $A_3=|H|\Id$, then $M$ is pseudo-umbilical and, by Theorem
\ref{th: classif_pseudo_umb_codim2}, it is minimal in
$\mathbb{S}^3(1/\sqrt{2})$. This implies that $\nabla^\perp
H=0$, and we have a contradiction.\\
(ii) If $A_3\neq|H|\Id$, then the Gauss and Codazzi equations lead
us to a contradiction and we conclude.
\end{proof}

\section{Properties of proper biharmonic submanifolds in spheres}\label{sec:3}

We begin this section by presenting some general properties of
proper biharmonic submanifolds with parallel mean curvature vector
field in spheres, which are consequences of
\eqref{caract_bih_spheres} and of the Codazzi and Gauss equations,
respectively.

\begin{proposition}\label{prop: prop||MC}
Let $M$ be a proper biharmonic submanifold with parallel mean
curvature vector field in $\mathbb{S}^n$. Then
\begin{itemize}
\item[(i)] $|A_H|^2=m|H|^2$, and it is constant,
\item[(ii)] $\trace \nabla A_H=0$,
\item[(iii)] $\langle\trace (\nabla^\perp B)(X,\cdot, A_H(\cdot)),H\rangle=\langle\trace (\nabla^\perp B)(\cdot,X, A_H(\cdot)),H\rangle=0$, for
all $X\in C(TM)$.
\end{itemize}
\end{proposition}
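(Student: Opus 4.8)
The plan is to derive all three identities from the biharmonic equation \eqref{caract_bih_spheres} specialized to the parallel mean curvature case, together with the Codazzi equation. Since $\nabla^\perp H=0$, the second equation of \eqref{caract_bih_spheres} is automatically satisfied, and the first reduces to $\trace B(\cdot,A_H(\cdot))=mH$, as recorded in Theorem~\ref{th: bih subm S^n}(i). I would begin by taking the inner product of this equation with $H$ to get $\langle \trace B(\cdot,A_H(\cdot)),H\rangle = m|H|^2$; since $\langle B(X,Y),H\rangle = \langle A_H X,Y\rangle$, the left side is $\trace(A_H\circ A_H)=|A_H|^2$, giving (i). That $|A_H|^2$ is constant follows because $|H|^2=\langle H,H\rangle$ is constant (differentiate and use $\nabla^\perp H=0$).

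For (ii), I would differentiate the identity $\trace B(\cdot,A_H(\cdot))=mH$ covariantly in the normal bundle, or better, unwind it as a statement about the symmetric operator-valued object and use the Codazzi equation $(\nabla^\perp_X B)(Y,Z)-(\nabla^\perp_Y B)(X,Z)$ is tangential-curvature, which in $\mathbb{S}^n$ vanishes (the ambient curvature term $R^{\mathbb{S}^n}(X,Y)Z$ has no normal component). The cleanest route: consider the $(1,1)$-tensor $P=\trace_Y \big( A_{B(\cdot,A_H\cdot)}\big)$ or, more directly, take $\langle \cdot,H\rangle$ throughout and relate $\trace B(\cdot,A_H\cdot)=mH$ to $\trace(\nabla A_H)$ via the formula $(\nabla_X A_H)Y = (\nabla^\perp_X B)(Y,\cdot)$-type identities combined with $\nabla^\perp H=0$ and the Codazzi equation. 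Concretely, $\trace\nabla A_H = \sum_i (\nabla_{E_i}A_H)E_i$; using Codazzi for $A_H$ (valid since $H$ is parallel) one gets $(\nabla_{E_i}A_H)E_j$ is symmetric in $i,j$ up to the vanishing normal curvature of the sphere, so $\trace\nabla A_H = \grad$ of something built from $\trace B(\cdot,A_H\cdot)$, which is $mH$, a parallel (hence "constant") object — forcing the trace to vanish. I would fill in this identification carefully as the technical core.

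For (iii), I would differentiate the identity $\langle\trace B(\cdot,A_H(\cdot)),H\rangle=m|H|^2$ (constant) in an arbitrary tangent direction $X$. Writing $\trace B(\cdot,A_H(\cdot))=\sum_i B(E_i,A_H E_i)$ and applying $\nabla^\perp_X$ while using $\nabla^\perp H=0$ and the compatibility of $\nabla^\perp$ with $\langle\cdot,\cdot\rangle$, the derivative of the right side is $0$, and the derivative of the left side splits into a term $\langle \trace(\nabla^\perp_X B)(\cdot,A_H\cdot),H\rangle$ plus a term involving $\nabla A_H$ that vanishes by (ii) (after tracing). This yields $\langle\trace(\nabla^\perp B)(X,\cdot,A_H(\cdot)),H\rangle=0$. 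The equality with the other ordering, $\langle\trace(\nabla^\perp B)(\cdot,X,A_H(\cdot)),H\rangle=0$, then follows by applying the Codazzi equation for $\mathbb{S}^n$: the difference $(\nabla^\perp_X B)(Y,Z)-(\nabla^\perp_Y B)(X,Z)$ has vanishing normal part, so both traced-and-paired-with-$H$ expressions coincide.

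The main obstacle I expect is part (ii): making precise the identification of $\trace\nabla A_H$ with a covariant derivative of $\trace B(\cdot,A_H(\cdot))$, keeping careful track of which derivatives act in $TM$ versus in the normal bundle and where the Codazzi equation (with its vanishing ambient-curvature normal component in the constant-curvature sphere) is invoked. Parts (i) and (iii) are then essentially bookkeeping once (ii) is in hand, with (iii) needing only one application of Codazzi to symmetrize the first two slots of $\nabla^\perp B$.
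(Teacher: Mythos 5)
Your overall route is the one the paper intends: the paper prints no proof of this proposition, saying only that it is a consequence of \eqref{caract_bih_spheres} (which, for parallel $H$, reduces to $\trace B(\cdot,A_H(\cdot))=mH$) together with the Codazzi equation, and those are exactly the ingredients you use. Part (i) and the final symmetrization step of (iii) are correct as written. Two points need repair, however. First, in (ii) the quantity whose constancy does the work is $\trace A_H=\sum_i\langle B(e_i,e_i),H\rangle=m|H|^2$, not $\trace B(\cdot,A_H(\cdot))$: the precise chain is
$$
\langle\trace\nabla A_H,X\rangle=\sum_i\langle(\nabla_{e_i}A_H)e_i,X\rangle=\sum_i\langle(\nabla_{e_i}A_H)X,e_i\rangle=\sum_i\langle(\nabla_{X}A_H)e_i,e_i\rangle=X(\trace A_H)=0,
$$
using the symmetry of each operator $\nabla_{e_i}A_H$ and the Codazzi equation for $A_H$ (valid because $\nabla^\perp H=0$ and $\mathbb{S}^n$ has constant curvature). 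Note that this shows (ii) holds for any submanifold with parallel mean curvature vector field in a space form; biharmonicity is not needed there. Second, and more substantively, in (iii) the leftover term produced by differentiating is $\sum_i\langle B\bigl(e_i,(\nabla_XA_H)e_i\bigr),H\rangle=\trace\bigl(A_H\circ(\nabla_XA_H)\bigr)=\tfrac12\,X\bigl(|A_H|^2\bigr)$, which vanishes by (i), not by (ii): part (ii) asserts the vanishing of the vector $\sum_i(\nabla_{e_i}A_H)e_i$, which is a different contraction and does not give $\trace(A_H\circ\nabla_XA_H)=0$. With that substitution your argument for (iii) closes correctly, since $\trace B(\cdot,A_H(\cdot))=mH$ is $\nabla^\perp$-parallel and the two slot orderings of $\nabla^\perp B$ agree by the Codazzi equation in the sphere.
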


\begin{proposition}\label{prop: prop||MC_from_Gauss_eq}
Let $M$ be a proper biharmonic submanifold with parallel mean
curvature vector field in $\mathbb{S}^n$. Let $p$ be an arbitrary
point on $M$ and consider $\{e_i\}_{i=1}^m$ to be an orthonormal
basis of eigenvectors for $A_H$ in $T_pM$ . Denote by
$\{a_i\}_{i=1}^m$ the eigenvalues of $A_H$ at $p$. Then, at $p$,
\begin{itemize}
\item[(i)] $m|H|^2=\displaystyle{\sum_{i=1}^m a_i=\sum_{i=1}^m (a_i)^2}$,
\item[(ii)] $\displaystyle{(2m-1)m|H|^2=\frac{1}{2}\sum_{i,j=1}^m(a_i+a_j)(K_{ij}+|B(e_i,e_j)|^2)}$,
\item[(iii)] $\displaystyle{(m-1+m|H|^2)m|H|^2=\sum_{i,j=1}^m a_i
a_j(K_{ij}+|B(e_i,e_j)|^2)}$,
\end{itemize}
where $K_{ij}$ denotes the sectional curvature of the $2$-plane
tangent to $M$ generated by $e_i$ and $e_j$.
\end{proposition}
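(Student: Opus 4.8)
The plan is to start from the characterization in Theorem~\ref{th: bih subm S^n}(i) for a submanifold with parallel mean curvature vector field, namely $\trace B(\cdot, A_H(\cdot)) = mH$, together with the conclusions of Proposition~\ref{prop: prop||MC}, and then contract the Gauss equation against $A_H$ in two different ways to extract the scalar identities. Fix $p\in M$ and choose the orthonormal basis $\{e_i\}$ of $T_pM$ diagonalizing $A_H$, with $A_H e_i = a_i e_i$. Statement (i) is then immediate: $\sum_i a_i = \trace A_H = m|H|^2$ since $\langle A_H X, Y\rangle = \langle B(X,Y), H\rangle$ and $\trace B = mH$; and $\sum_i a_i^2 = |A_H|^2 = m|H|^2$ by Proposition~\ref{prop: prop||MC}(i). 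So the work is entirely in (ii) and (iii).

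For (ii) and (iii) I would take the inner product of the biharmonicity relation $\sum_k B(e_k, A_H e_k) = mH$ (i.e. $\sum_k a_k B(e_k,e_k) = mH$, valid at $p$ in this frame) with a suitable combination and then invoke the Gauss equation
\[
K_{ij} = 1 + \langle B(e_i,e_i), B(e_j,e_j)\rangle - |B(e_i,e_j)|^2,
\]
which holds since the ambient sphere has constant sectional curvature $1$. The key computation is to evaluate $\sum_{i,j} f(a_i,a_j)\langle B(e_i,e_i),B(e_j,e_j)\rangle$ for the weights $f(a_i,a_j) = \tfrac12(a_i+a_j)$ and $f(a_i,a_j)=a_i a_j$, respectively. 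For the product weight, $\sum_{i,j} a_i a_j \langle B(e_i,e_i),B(e_j,e_j)\rangle = |\sum_i a_i B(e_i,e_i)|^2 = |mH|^2 = m^2|H|^2$; for the symmetric linear weight, $\sum_{i,j}\tfrac12(a_i+a_j)\langle B(e_i,e_i),B(e_j,e_j)\rangle = \sum_i a_i \langle B(e_i,e_i), \sum_j B(e_j,e_j)\rangle = \sum_i a_i \langle B(e_i,e_i), mH\rangle = m\sum_i a_i^2 = m\cdot m|H|^2 = m^2|H|^2$. Substituting the Gauss equation into the right-hand sides of (ii) and (iii), each right-hand side splits as $\sum_{i,j} f(a_i,a_j)\bigl(1 + \langle B(e_i,e_i),B(e_j,e_j)\rangle\bigr)$. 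The ``$1$'' term contributes $\sum_{i,j}\tfrac12(a_i+a_j) = m\sum_i a_i = m\cdot m|H|^2$ in case (ii), and $\sum_{i,j} a_i a_j = (\sum_i a_i)^2 = m^2|H|^4$ in case (iii); adding the $\langle B,B\rangle$ contributions computed above gives $m^2|H|^2 + m^2|H|^2 = 2m\cdot m|H|^2$ for (ii), matching $(2m-1)m|H|^2$ after recalling that the left sum in (ii) is $\sum_{i,j}\tfrac12(a_i+a_j)|B(e_i,e_j)|^2$ and using the Gauss substitution once more — here one must be careful that the stated identities already have the $|B(e_i,e_j)|^2$ grouped \emph{with} $K_{ij}$, so in fact the cleanest route is: move the $|B(e_i,e_j)|^2$ terms to combine with $K_{ij}$ via Gauss, reducing (ii) and (iii) to $\sum_{i,j} f(a_i,a_j)(1 + \langle B(e_i,e_i),B(e_j,e_j)\rangle)$, and then the two displayed computations finish it.

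The main obstacle is purely bookkeeping: keeping straight which terms have been absorbed by the Gauss equation and verifying that the numerical coefficients $(2m-1)$ in (ii) and $(m-1+m|H|^2)$ in (iii) come out correctly after all substitutions — in particular that no cross terms $\sum_{i\ne j}$ versus $\sum_{i,j}$ discrepancy is overlooked, and that Proposition~\ref{prop: prop||MC}(i) (constancy and the value $m|H|^2$ of $|A_H|^2$) is used at the right place. No hard analysis is involved; once the contraction pattern $\sum a_i a_j \langle B(e_i,e_i),B(e_j,e_j)\rangle = |mH|^2$ and its linear analogue are in hand, (ii) and (iii) follow by algebra, and (i) is essentially definitional.
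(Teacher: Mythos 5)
The paper gives no written proof of this proposition (it only says it follows from \eqref{caract_bih_spheres} and the Gauss equation), and your overall strategy --- diagonalize $A_H$, use $\sum_i a_iB(e_i,e_i)=\trace B(\cdot,A_H(\cdot))=mH$ and $\sum_j B(e_j,e_j)=mH$, and contract the Gauss equation against the weights $\tfrac12(a_i+a_j)$ and $a_ia_j$ --- is exactly the intended one, and your two key contractions $\sum_{i,j}a_ia_j\langle B(e_i,e_i),B(e_j,e_j)\rangle=m^2|H|^2$ and $\sum_{i,j}\tfrac12(a_i+a_j)\langle B(e_i,e_i),B(e_j,e_j)\rangle=m^2|H|^2$ are correct. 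Part (i) is also fine.

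There is, however, a genuine error in the final step: you apply the Gauss equation in the form $K_{ij}+|B(e_i,e_j)|^2=1+\langle B(e_i,e_i),B(e_j,e_j)\rangle$ to \emph{all} pairs $(i,j)$, including $i=j$. On the diagonal this is false: $K_{ii}=\langle R(e_i,e_i)e_i,e_i\rangle=0$, so the correct identity valid for all $i,j$ is
\[
K_{ij}+|B(e_i,e_j)|^2=(1-\delta_{ij})+\langle B(e_i,e_i),B(e_j,e_j)\rangle .
\]
The missing $-\delta_{ij}$ contributes $-\sum_i a_i=-m|H|^2$ in (ii) and $-\sum_i a_i^2=-m|H|^2$ in (iii); this is precisely the discrepancy you ran into, since your computation yields $2m\cdot m|H|^2$ for (ii) and $(m+m|H|^2)m|H|^2$ for (iii) instead of the stated $(2m-1)m|H|^2$ and $(m-1+m|H|^2)m|H|^2$. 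You notice the mismatch in (ii) (``matching $(2m-1)m|H|^2$ after recalling\ldots'') but the ``cleanest route'' you then propose still reduces both sides to $\sum_{i,j}f(a_i,a_j)\bigl(1+\langle B(e_i,e_i),B(e_j,e_j)\rangle\bigr)$ and therefore carries the same error. With the corrected substitution the ``constant curvature'' term contributes $(m-1)\sum_i a_i=(m-1)m|H|^2$ in (ii) and $(\sum_i a_i)^2-\sum_i a_i^2=m^2|H|^4-m|H|^2$ in (iii), and adding your (correct) $\langle B,B\rangle$ contributions gives exactly the stated identities. (A quick sanity check on $\mathbb{S}^m(1/\sqrt2)\subset\mathbb{S}^{m+1}$, where $a_i=1$, $K_{ij}=2$ for $i\neq j$ and $|B(e_i,e_j)|^2=\delta_{ij}$, confirms that the diagonal convention $K_{ii}=0$ is the one the proposition uses.)
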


For what concerns proper biharmonic constant mean curvature
submanifolds in spheres, a partial classification result was
obtained.

\begin{theorem}[\cite{O3}]\label{th: classif_bih const mean}
Let $M$ be a proper biharmonic submanifold with constant mean
curvature in $\mathbb{S}^n$. Then $|H|\in(0,1]$. Moreover, if
$|H|=1$, then $M$ is a minimal submanifold of a hypersphere
$\mathbb{S}^{n-1}(1/\sqrt{2})\subset\mathbb{S}^n$.
\end{theorem}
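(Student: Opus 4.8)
The plan is to exploit the two equations of the biharmonic system \eqref{caract_bih_spheres} together with a clever choice of test function and an integral (or rather pointwise, since $M$ need not be compact) argument. Since $|H|$ is assumed constant, the second equation $4\trace A_{\nabla^\perp_{(\cdot)}H}(\cdot)+m\grad(|H|^2)=0$ reduces to $\trace A_{\nabla^\perp_{(\cdot)}H}(\cdot)=0$. I would first take the scalar product of the first equation of \eqref{caract_bih_spheres} with $H$ and use $|H|$ constant: this gives $\langle \Delta^\perp H, H\rangle + \langle \trace B(\cdot, A_H(\cdot)), H\rangle - m|H|^2=0$, i.e. $\langle \Delta^\perp H, H\rangle = m|H|^2 - |A_H|^2$, where I have used $\langle \trace B(\cdot,A_H(\cdot)),H\rangle = \trace\langle A_H\cdot, A_H\cdot\rangle = |A_H|^2$. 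On the other hand, the Weitzenböck-type identity $\tfrac12\Delta|H|^2 = \langle \Delta^\perp H, H\rangle - |\nabla^\perp H|^2$ together with $|H|$ constant yields $\langle \Delta^\perp H, H\rangle = |\nabla^\perp H|^2 \geq 0$. Hence $|A_H|^2 = m|H|^2 - |\nabla^\perp H|^2 \leq m|H|^2$.

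Next I would bring in the Cauchy–Schwarz inequality in the form $|A_H|^2 \geq \tfrac{1}{m}(\trace A_H)^2 = \tfrac{1}{m}(m|H|^2)^2 = m|H|^4$, using that $\trace A_H = m|H|^2$ by definition of the mean curvature vector. Combining the two bounds gives $m|H|^4 \leq |A_H|^2 \leq m|H|^2$, whence $|H|^4 \leq |H|^2$, i.e. $|H|^2 \leq 1$, so $|H|\in(0,1]$ (the lower endpoint is excluded since $M$ is proper biharmonic, hence non-minimal; actually one should check $H$ is nowhere zero — for constant $|H|>0$ this is automatic). This settles the first assertion.

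For the equality case $|H|=1$: all the inequalities above become equalities. From $|A_H|^2 = m|H|^2$ and $|\nabla^\perp H|^2 = m|H|^2 - |A_H|^2 = 0$ we get $\nabla^\perp H = 0$, so $M$ has parallel mean curvature vector field. From the Cauchy–Schwarz equality case $|A_H|^2 = \tfrac1m(\trace A_H)^2$ we deduce that $A_H = \mu\,\Id$ for some function $\mu$; taking traces, $\mu m = m|H|^2 = m$, so $A_H = \Id = |H|^2\Id$, i.e. $M$ is pseudo-umbilical. A pseudo-umbilical submanifold with parallel mean curvature vector in a sphere and $|H|=1$ is, by a classical result (going back to the theory of submanifolds with parallel mean curvature; see e.g. the references cited around Theorem~\ref{th: rm_minim}), minimal in the hypersphere $\mathbb{S}^{n-1}(1/\sqrt2)$: indeed, writing $\mathbf{i}\circ\phi=\varphi$, the condition $A_H=|H|^2\Id$ with $\nabla^\perp H=0$ forces the position vector to satisfy $\Delta\varphi = 2m\varphi$ shifted appropriately, placing $\varphi$ on a sphere of radius $1/\sqrt2$ where it is minimal.

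The main obstacle I anticipate is the last step — deducing from "pseudo-umbilical, parallel $H$, $|H|=1$" that $M$ actually sits minimally inside $\mathbb{S}^{n-1}(1/\sqrt2)$ rather than merely abstractly satisfying these conditions. One must verify that the hypersphere in question is totally umbilic and correctly positioned, and that the second fundamental form of $M$ in $\mathbb{S}^n$ splits as (second fundamental form of $M$ in $\mathbb{S}^{n-1}(1/\sqrt2)$) plus the (umbilic) contribution of $\mathbb{S}^{n-1}(1/\sqrt2)$ in $\mathbb{S}^n$; the trace-free part of $B$ must then be tangent to the small sphere, giving minimality there. This is the reduction-to-a-hypersphere lemma for pseudo-umbilical parallel-mean-curvature submanifolds, which I would either cite or reprove via the moving-frames computation sketched in the proof of the $\mathbb{S}^4$ result above.
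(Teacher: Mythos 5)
Your proof is correct and takes essentially the same route as the source the paper cites for this theorem (\cite{O3}, see also \cite{BMO1}): pair the first equation of \eqref{caract_bih_spheres} with $H$, use constancy of $|H|$ to get $\langle\Delta^\perp H,H\rangle=|\nabla^\perp H|^2\geq 0$, squeeze $|A_H|^2$ between $m|H|^4$ and $m|H|^2$, and in the equality case invoke the classical reduction of pseudo-umbilical submanifolds with parallel mean curvature vector field to minimal submanifolds of a hypersphere. The survey itself states the theorem without proof, so there is nothing further to compare.
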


Also, the properties regarding the type of the main examples
previously presented are not casual. In fact, Theorem \ref{th:
classif_bih const mean} was extended by establishing a general link
between compact proper biharmonic constant mean curvature
submanifolds in spheres and finite type submanifolds in the
Euclidean space.

\begin{theorem}[\cite{BMO1}]\label{th: tip_subv} Let $M^m$ be a compact constant
mean curvature, $|H|\in(0,1]$, sub\-manifold in  $\mathbb{S}^n$.
Then $M$ is proper biharmonic if and only if

either
\begin{itemize}
\item[(i)]  $|H|=1$ and $M$ is a 1-type
submanifold of $\mathbb{R}^{n+1}$ with eigenvalue $\lambda=2m$ and
center of mass of norm equal to $1/\sqrt{2}$,
\end{itemize}

or
\begin{itemize}
\item[(ii)] $|H|\in (0,1)$ and $M$ is a mass-symmetric $2$-type
submanifold of $\mathbb{R}^{n+1}$ with eigenvalues
$\lambda_{p}=m(1-|H|)$ and $\lambda_{q}=m(1+|H|)$.
\end{itemize}
\end{theorem}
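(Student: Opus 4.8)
The plan is to reduce the biharmonicity of $M$ in $\mathbb{S}^n$ to a single fourth-order linear equation for the position vector field $\varphi=\mathbf{i}\circ\phi:M\to\mathbb{R}^{n+1}$ and then to extract the spectral consequences from Chen's decomposition. First I would record two standard facts. By the Gauss formula for the chain $M\subset\mathbb{S}^n\subset\mathbb{R}^{n+1}$ (using that $\mathbb{S}^n$ is totally umbilic with second fundamental form $-\langle\cdot,\cdot\rangle\varphi$) together with Beltrami's equation, one has $\Delta\varphi=m\varphi-mH$, where $\Delta$ acts componentwise and $H$ is regarded as an $\mathbb{R}^{n+1}$-valued map; in particular $H=\varphi-\frac{1}{m}\Delta\varphi$. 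On the other hand, plugging $\tau(\phi)=mH$ and the curvature tensor of $\mathbb{S}^n$ into the expression for the bitension field recalled in Section~\ref{sec:1} gives $\tau_2(\phi)=-m\bar\Delta H+m^2H$, so that $M$ is biharmonic in $\mathbb{S}^n$ if and only if $\bar\Delta H=mH$, where $\bar\Delta$ is the rough Laplacian of $\phi^{-1}\nabla^{\mathbb{S}^n}$ — this is just the pair of equations of Theorem~\ref{th: bih subm S^n}(i) written together.

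The second step relates $\bar\Delta H$ to the componentwise Euclidean Laplacian $\Delta H$: using the same Gauss formula for $\mathbb{S}^n\subset\mathbb{R}^{n+1}$ and that $H$ is normal to $M$, a short computation in an orthonormal frame geodesic at a point yields $\Delta H=\bar\Delta H-m|H|^2\varphi$, an identity valid for any submanifold. Substituting $\bar\Delta H=mH$ and eliminating $H$ through $H=\varphi-\frac{1}{m}\Delta\varphi$, I obtain that $M$ is biharmonic in $\mathbb{S}^n$ if and only if
$$
\Delta^2\varphi-2m\,\Delta\varphi+m^2\bigl(1-|H|^2\bigr)\varphi=0 .
$$
The constant mean curvature hypothesis enters precisely here, turning $m^2(1-|H|^2)$ into a constant, so that the equation reads $\bigl(\Delta-m(1-|H|)\bigr)\bigl(\Delta-m(1+|H|)\bigr)\varphi=0$.

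Now I would use compactness and Chen's spectral decomposition $\varphi=\varphi_0+\sum_{t\ge1}\varphi_t$, $\Delta\varphi_t=\lambda_t\varphi_t$, $0<\lambda_1<\lambda_2<\cdots$. Setting $P(\Delta)=\bigl(\Delta-m(1-|H|)\bigr)\bigl(\Delta-m(1+|H|)\bigr)$ and applying it term by term, the displayed equation is equivalent to $P(0)\varphi_0=0$ and $P(\lambda_t)\varphi_t=0$ for all $t$. When $|H|\in(0,1)$ we have $P(0)=m^2(1-|H|^2)\neq0$, hence $\varphi_0=0$, so $M$ is mass-symmetric, and each surviving $\varphi_t$ has $\lambda_t\in\{m(1-|H|),\,m(1+|H|)\}$; since $\varphi$ is non-constant, $M$ is at least $1$-type, and Takahashi's theorem excludes the $1$-type case (a mass-symmetric $1$-type immersion into the unit sphere is minimal with eigenvalue $m$, which would force $|H|=0$), so $M$ is a mass-symmetric $2$-type submanifold with eigenvalues $m(1\mp|H|)$. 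When $|H|=1$, $P(t)=t(t-2m)$ and $P(0)=0$, so the only admissible positive eigenvalue is $2m$ and $M$ is $1$-type with $\lambda=2m$; Takahashi's theorem applied to $\varphi-\varphi_0$ gives $|\varphi-\varphi_0|\equiv1/\sqrt2$, and then expanding $1=|\varphi|^2=|\varphi_0|^2+2\langle\varphi_0,\varphi-\varphi_0\rangle+\frac{1}{2}$ and integrating over $M$ leaves $|\varphi_0|=1/\sqrt2$. The converses are immediate: the indicated spectral data make $P(\Delta)\varphi=0$, so $M$ is biharmonic, and it is non-minimal (a minimal submanifold of $\mathbb{S}^n$ is $1$-type with eigenvalue $m$, and in case (i) non-minimality is already built into $|H|=1$), hence proper biharmonic.

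The step I expect to be the main obstacle is obtaining the displayed fourth-order equation, since it requires keeping careful track of three a priori different Laplacians — the componentwise Euclidean one on $\varphi$ and on $H$, the rough Laplacian $\bar\Delta$ along $\mathbb{S}^n$, and the normal-bundle Laplacian $\Delta^\perp$ of Theorem~\ref{th: bih subm S^n} — together with the two Gauss formulae and consistent sign conventions. Everything after that identity is a routine application of Chen's finite-type theory and Takahashi's theorem.
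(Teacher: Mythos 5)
The paper states this theorem without proof (it is quoted from \cite{BMO1}), and your argument is correct and is essentially the one used there: reduce biharmonicity plus constant $|H|$ to $\bigl(\Delta-m(1-|H|)\bigr)\bigl(\Delta-m(1+|H|)\bigr)\varphi=0$ via $\Delta\varphi=m\varphi-mH$ and $\Delta H=\bar\Delta H-m|H|^2\varphi$, then read off the conclusion from Chen's spectral decomposition and Takahashi's theorem. Both key identities check out (e.g.\ against $\mathbb{S}^m(a)\subset\mathbb{S}^{m+1}$ and the eigenvalues $2m_1,2m_2$ of the Clifford tori), and your exclusion of the $1$-type case for $|H|\in(0,1)$ and the computation $|\varphi_0|=1/\sqrt{2}$ for $|H|=1$ are handled correctly.
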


This can be further used in order to obtain some necessary
conditions that compact proper biharmonic submanifolds with constant
mean curvature in spheres must fulfill.

\begin{corollary}\label{cor: conseq_type}
Let $M^m$ be a compact proper biharmonic constant mean curvature,
$|H|\in(0,1)$, sub\-manifold in $\mathbb{S}^n$. Then
\begin{itemize}
\item[(i)] $\lambda_1\leq m(1-|H|)$, where $\lambda_1$ is the first non-zero eigenvalue
of the Laplacian on $M$,
\item[(ii)] if $\rm{Ricci}(X,X)\geq cg(X,X)$, for all $X\in C(TM)$,
where $c>0$, we have $c\leq (m-1)(1-|H|)$.
\end{itemize}
\end{corollary}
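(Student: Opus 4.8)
The plan is to derive both statements directly from Theorem~\ref{th: tip_subv}(ii), which identifies a compact proper biharmonic constant mean curvature submanifold with $|H|\in(0,1)$ as a mass-symmetric $2$-type submanifold of $\mathbb{R}^{n+1}$ with eigenvalues $\lambda_p=m(1-|H|)$ and $\lambda_q=m(1+|H|)$. Since the position vector decomposes as $\varphi=\varphi_p+\varphi_q$ with $\Delta\varphi_p=\lambda_p\varphi_p$ and $\Delta\varphi_q=\lambda_q\varphi_q$, the component $\varphi_p$ is a (nonzero) vector-valued eigenfunction of the Laplacian on $M$ with eigenvalue $\lambda_p=m(1-|H|)$. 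Hence $m(1-|H|)$ belongs to the spectrum of $\Delta$ on $M$, so the first nonzero eigenvalue satisfies $\lambda_1\leq m(1-|H|)$, which is part~(i).

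For part~(ii), the idea is to combine the same eigenfunction $\varphi_p$ with the Bochner--Lichnerowicz formula. For any smooth function $f$ on a compact Riemannian manifold one has
\begin{equation*}
\frac{1}{2}\Delta|\grad f|^2=-|\mathrm{Hess}\,f|^2-\langle\grad f,\grad\Delta f\rangle-\ricci(\grad f,\grad f),
\end{equation*}
where I follow the sign convention $\Delta=-\mathrm{div}\,\grad$ used in the paper. Integrating over $M$ and applying this componentwise to the components of $\varphi_p$, using $\Delta\varphi_p=\lambda_p\varphi_p$, gives
\begin{equation*}
\int_M\ricci(\grad\varphi_p,\grad\varphi_p)\,v_g=\lambda_p\int_M|\grad\varphi_p|^2\,v_g-\int_M|\mathrm{Hess}\,\varphi_p|^2\,v_g.
\end{equation*}
By the refined Kato (Lichnerowicz) inequality $|\mathrm{Hess}\,\varphi_p|^2\geq\frac{1}{m}(\Delta\varphi_p)^2=\frac{\lambda_p^2}{m}|\varphi_p|^2$ applied to each component, together with $\int_M|\grad\varphi_p|^2\,v_g=\lambda_p\int_M|\varphi_p|^2\,v_g$ (integration by parts), one obtains
\begin{equation*}
\int_M\ricci(\grad\varphi_p,\grad\varphi_p)\,v_g\leq\Bigl(\lambda_p^2-\frac{\lambda_p^2}{m}\Bigr)\int_M|\varphi_p|^2\,v_g=\frac{(m-1)\lambda_p^2}{m}\int_M|\varphi_p|^2\,v_g.
\end{equation*}
On the other hand, the lower Ricci bound $\ricci\geq c\,g$ gives $\int_M\ricci(\grad\varphi_p,\grad\varphi_p)\,v_g\geq c\int_M|\grad\varphi_p|^2\,v_g=c\lambda_p\int_M|\varphi_p|^2\,v_g$. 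Comparing the two inequalities and dividing by $\lambda_p\int_M|\varphi_p|^2\,v_g>0$ yields $c\leq\frac{(m-1)\lambda_p}{m}=(m-1)(1-|H|)$, which is~(ii).

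The main obstacle is making the Bochner argument clean for a vector-valued eigenfunction and handling the sign conventions consistently (the paper uses the geometer's Laplacian $\Delta=-\mathrm{trace}\nabla^2$, so care is needed so that eigenvalues come out positive and the Bochner identity has the right signs); one must also check that $\varphi_p\not\equiv 0$, which is guaranteed precisely by the $2$-type hypothesis, and that equality analysis is not needed since only the inequality is claimed. A cleaner alternative for~(ii) is to invoke directly the classical eigenvalue estimate: on a compact manifold with $\ricci\geq c\,g$, $c>0$, any nonzero eigenvalue $\lambda$ of $\Delta$ on functions satisfies $\lambda\geq\frac{mc}{m-1}$ (Lichnerowicz), and applying this to $\lambda_p=m(1-|H|)$ immediately gives $m(1-|H|)\geq\frac{mc}{m-1}$, i.e.\ $c\leq(m-1)(1-|H|)$; this is the route I would actually write, citing Lichnerowicz's theorem, since it avoids reproving Bochner from scratch.
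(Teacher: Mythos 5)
Your proof is correct and follows essentially the same route as the paper: part (i) reads off the nonzero eigenvalue $m(1-|H|)$ from the $2$-type decomposition of Theorem~\ref{th: tip_subv}, and for part (ii) the paper does exactly what you propose as the ``cleaner alternative,'' namely combining (i) with the Lichnerowicz bound $\lambda_1\geq\frac{m}{m-1}c$. Your inlined Bochner computation is simply a proof of that bound (note that the sign of the $\langle\grad f,\grad\Delta f\rangle$ term in your displayed formula is off for the convention $\Delta=-\Div\grad$, although the integrated identity you then write down is the correct one).
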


\begin{proof}
(i) From Theorem \ref{th: tip_subv} it follows that the inclusion
map of $M$ in $\mathbb{R}^{n+1}$, $\varphi:M\to\mathbb{R}^{n+1}$,
decomposes as $\varphi=\varphi_p+\varphi_q$, where $\Delta
\varphi_p=\lambda_p\varphi_p$, $\Delta
\varphi_q=\lambda_q\varphi_q$, $\lambda_p=m(1-|H|)$ and
$\lambda_q=m(1+|H|)$. Conclusively, $m(1-|H|)$ is a non-zero
eigenvalue of the Laplacian on $M$, and thus $\lambda_1\leq
m(1-|H|)$.

(ii) The condition $\rm{Ricci}(X,X)\geq cg(X,X)$, for all $X\in
C(TM)$, implies, by a well-known result of Lichnerowicz (see
\cite{BGM}), that $\lambda_1\geq \frac{m}{m-1}c$. This, together
with (i), leads to the conclusion.
\end{proof}

We shall need the following result in order to obtain a refinement
of Theorem \ref{th: classif_bih const mean}.

\begin{theorem}[\cite{HZL}]\label{th: Haizhong Li}
Let $M$ be a compact hypersurface with constant normalized scalar
curvature $r=\frac{s}{m(m-1)}$ in $\mathbb{S}^{m+1}$. If
\begin{itemize}
\item[(i)] $r\geq 1$,
\item[(ii)] the squared norm $|B|^2$ of the second fundamental
form of $M$ satisfies
\begin{equation}\label{eq: eval_Li}
m(r-1)\leq |B|^2\leq(m-1)\frac{m(r-1)+2}{m-2}+\frac{m-2}{m(r-1)+2},
\end{equation}
\end{itemize}
then either $ |B|^2=m(r-1)$ and $M$ is a totally umbilical
hypersurface; or
$$
|B|^2=(m-1)\frac{m(r-1)+2}{m-2}+\frac{m-2}{m(r-1)+2}
$$
and $M=\mathbb{S}^1(\sqrt{1-c^2})\times\mathbb{S}^{m-1}(c)$, with
$c^2=\frac{m-2}{mr}$.
\end{theorem}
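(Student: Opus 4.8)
Let $A$ be the shape operator of $M\subset\mathbb{S}^{m+1}$ (so $m\geq 3$), normalize the mean curvature by $\trace A=mH$, and set $\Phi=A-H\,\Id$. Then $\Phi$ is trace-free, $|\Phi|^{2}=|B|^{2}-mH^{2}$, and the Gauss equation $m(m-1)r=m(m-1)+m^{2}H^{2}-|B|^{2}$ makes $|B|^{2}$, $m^{2}H^{2}$ and $|\Phi|^{2}$ affinely dependent once $r$ is fixed; explicitly $|\Phi|^{2}=m(m-1)\bigl(H^{2}-(r-1)\bigr)$, and constancy of $r$ amounts to constancy of $m^{2}H^{2}-|B|^{2}$. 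From $r\geq 1$ we get $m^{2}H^{2}=|B|^{2}+m(m-1)(r-1)\geq 0$; if $H\equiv0$ then $|B|^{2}\equiv0$, $r=1$, and $M$ is totally geodesic — the first alternative — so from now on we may assume $H>0$.

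The tool is the Cheng--Yau operator $\mathcal{L}f=\trace\bigl((mH\,\Id-A)\,\nabla^{2}f\bigr)$. Since $mH\,\Id-A$ is symmetric and divergence-free (this is Codazzi in a space form), one has $\mathcal{L}f=\Div\bigl((mH\,\Id-A)\,\grad f\bigr)$, so $\mathcal{L}$ is self-adjoint and $\int_{M}\mathcal{L}f=0$ on the compact $M$. Applying $\mathcal{L}$ to $mH$, substituting the Simons-type Bochner formula for $\Delta A$, and cancelling $\tfrac12\Delta(m^{2}H^{2}-|B|^{2})$, which vanishes by constancy of $r$, one is led to an identity
\[
\mathcal{L}(mH)=\bigl(|\nabla B|^{2}-m^{2}|\grad H|^{2}\bigr)+P\bigl(mH,\,|B|^{2},\,\trace(A^{3})\bigr),
\]
with $P$ an explicit curvature polynomial. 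The gradient part is not pointwise signed, but it is handled in the classical way — by a Kato-type inequality exploiting the total symmetry of $\nabla B$ (Codazzi), together with an extra integration by parts in which $|\grad H|^{2}$ is re-expressed through $\nabla|\Phi|^{2}$ via $|\Phi|^{2}=m(m-1)(H^{2}-(r-1))$. After this, integrating $\mathcal{L}(mH)$ over $M$ reduces the whole matter to a single pointwise polynomial inequality in $|\Phi|^{2}$.

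To run that inequality one rewrites $\trace(A^{3})$ and $|B|^{2}$ in terms of $|\Phi|^{2}$ and $H$ through the Gauss relation, and then applies Okumura's inequality $\bigl|\trace(\Phi^{3})\bigr|\leq\frac{m-2}{\sqrt{m(m-1)}}\,|\Phi|^{3}$, whose equality case is that $\Phi$ has an eigenvalue of multiplicity at least $m-1$. The resulting bound has the shape $|\Phi|^{2}\,Q(|\Phi|^{2})$, where $Q$ is an explicit quadratic whose two zeros correspond exactly to the two endpoints of \eqref{eq: eval_Li}: the value $|\Phi|^{2}=0$, i.e. $|B|^{2}=m(r-1)$, and the value for which $|B|^{2}=(m-1)\frac{m(r-1)+2}{m-2}+\frac{m-2}{m(r-1)+2}$ (the reciprocal summand $\frac{m-2}{m(r-1)+2}$ being the fingerprint of the equality case of Okumura's inequality). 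Hypothesis (ii) says precisely that $|B|^{2}$, equivalently $|\Phi|^{2}$, lies in the closed interval bounded by these two zeros, which is exactly the range on which $Q(|\Phi|^{2})$ has the sign that forces the integrand, hence every inequality used, to be identically an equality.

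Reading off the geometry: the equalities give $\grad H\equiv0$, so $H$ and thus $r$, $|B|^{2}$, $|\Phi|^{2}$ are constant, and equality in Okumura forces $A$ to have at most two distinct principal curvatures, one of multiplicity $m-1$. Hence $M$ is isoparametric with at most two principal curvatures, and by the classification of such hypersurfaces in $\mathbb{S}^{m+1}$ it is either a totally umbilical geodesic hypersphere — when $\Phi\equiv0$, i.e. $|B|^{2}=m(r-1)$ — or a product $\mathbb{S}^{1}(\sqrt{1-c^{2}})\times\mathbb{S}^{m-1}(c)$. In the latter case the two principal curvatures satisfy $\kappa_{1}\kappa_{2}=-1$, so $s=(m-1)(m-2)(1+\kappa_{2}^{2})=(m-1)(m-2)/c^{2}$ and therefore $r=(m-2)/(mc^{2})$, i.e. $c^{2}=\frac{m-2}{mr}$, while $|B|^{2}$ equals the right endpoint of \eqref{eq: eval_Li}. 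The heart of the argument — and its only real obstacle — is the Bochner bookkeeping: producing $P$ precisely, controlling the gradient terms, and then, via the Gauss relation and Okumura's inequality, shaping the outcome into the factored form $|\Phi|^{2}\,Q(|\Phi|^{2})$ whose root locus reproduces \eqref{eq: eval_Li} on the nose, capturing the fine constant $\frac{m-2}{m(r-1)+2}$ and not merely the leading pinching $|B|^{2}\leq(m-1)\frac{m(r-1)+2}{m-2}$.
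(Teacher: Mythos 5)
This statement is not proved in the paper at all: it is quoted verbatim from Li's article \cite{HZL} and used as a black box in the proof of Theorem~\ref{th: raf_tip_hyper}, so there is no in-paper argument to compare yours against. Judged on its own terms, your sketch is a faithful outline of the actual proof in \cite{HZL}: the self-adjoint Cheng--Yau operator $\mathcal{L}f=\trace\bigl((mH\Id-A)\nabla^2 f\bigr)$ applied to $mH$, Simons' formula, the cancellation of $\tfrac12\Delta(m^2H^2-|B|^2)$ via constancy of $r$, Okumura's lemma for the trace-free part $\Phi$, and the rigidity coming from its equality case (two principal curvatures, one of multiplicity $m-1$), with the correct identification $|\Phi|^2=\frac{m-1}{m}\bigl(|B|^2-m(r-1)\bigr)$ so that the two alternatives are exactly $\Phi\equiv 0$ and the upper endpoint of \eqref{eq: eval_Li}. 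Two small remarks. First, under the hypothesis $r\geq 1$ the gradient estimate $|\nabla B|^2\geq m^2|\grad H|^2$ is purely pointwise and needs no extra integration by parts: constancy of $m^2H^2-|B|^2$ gives $\nabla|B|^2=2m^2H\,\grad H$, Cauchy--Schwarz gives $|\nabla B|^2\geq |\nabla|B|^2|^2/(4|B|^2)$, and $r\geq 1$ is exactly $|B|^2\leq m^2H^2$. Second, the dichotomy ``$H\equiv 0$ or $H>0$'' is slightly too coarse when $r=1$, where $H$ may vanish on a subset; one fixes an orientation with $H\geq 0$ and argues on the open set $\{H>0\}$. Beyond that, the only thing missing is what you yourself flag: the explicit curvature polynomial $P$ and the factorization $|\Phi|^2Q$ whose roots reproduce \eqref{eq: eval_Li}, which is the computational core of \cite{HZL}. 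As a sketch of a cited external theorem this is accurate and structurally complete, but it is an outline, not a self-contained proof.
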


We get the following theorem.

\begin{theorem}{\label{th: raf_tip_hyper}}
Let $M^m$, $m\geq 4$, be a compact proper biharmonic constant mean
curvature hypersurface in $\mathbb{S}^{m+1}$. Then
$|H|\in(0,\frac{m-2}{m}]\cup\{1\}$. Moreover,
\begin{itemize}
\item[(i)]  $|H|=1$ if and only if $M=\mathbb{S}^m(1/\sqrt{2})$,
\end{itemize}
and
\begin{itemize}
\item[(ii)] $|H|=\frac{m-2}{m}$ if and only if
$M=\mathbb{S}^1(1/\sqrt{2})\times\mathbb{S}^{m-1}(1/\sqrt{2})$.
\end{itemize}
\end{theorem}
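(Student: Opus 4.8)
The plan is to feed the biharmonicity of $M$ into the pinching theorem of Haizhong Li (Theorem~\ref{th: Haizhong Li}). First I would dispose of the case $|H|=1$: by Theorem~\ref{th: classif_bih const mean} this forces $M$ to be minimal in some $\mathbb{S}^{m}(1/\sqrt{2})\subset\mathbb{S}^{m+1}$, and since $M$ is a compact $m$-dimensional hypersurface, $M=\mathbb{S}^{m}(1/\sqrt{2})$; the converse, that $\mathbb{S}^{m}(1/\sqrt{2})$ is proper biharmonic with $|H|=1$, is the computation recorded in Section~\ref{sec:2.1}. This settles (i), and by Theorem~\ref{th: classif_bih const mean} the only remaining range is $|H|\in(0,1)$.

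So suppose $|H|\in(0,1)$. By Theorem~\ref{th: bih subm S^n}(ii) we have $|B|^2=|A|^2=m$, and by Proposition~\ref{th: curb_scal_hyp} the scalar curvature is the positive constant $s=m^2(1+|H|^2)-2m$; hence $M$ has constant normalized scalar curvature
\[
r=\frac{s}{m(m-1)}=\frac{m(1+|H|^2)-2}{m-1},\qquad r-1=\frac{m|H|^2-1}{m-1}.
\]
Because $m\ge4$ one has $\tfrac{m-2}{m}\ge\tfrac1{\sqrt m}$ (equivalently $(m-1)(m-4)\ge0$), so whenever $|H|\ge\tfrac{m-2}{m}$ we get $m|H|^2\ge1$, i.e. $r\ge1$ --- this is the only place where the hypothesis $m\ge4$ is genuinely used, and it is exactly condition (i) of Theorem~\ref{th: Haizhong Li}. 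I would then check the pinching \eqref{eq: eval_Li}: the lower bound $m(r-1)\le|B|^2=m$ amounts to $|H|\le1$ and so holds strictly, while for the upper bound, writing $u:=m(r-1)+2=\tfrac{m^2|H|^2+m-2}{m-1}>0$ and clearing denominators, the inequality $m\le(m-1)\tfrac{u}{m-2}+\tfrac{m-2}{u}$ becomes $(m-1)\bigl(u-(m-2)\bigr)\bigl(u-\tfrac{m-2}{m-1}\bigr)\ge0$; since $u>\tfrac{m-2}{m-1}$ always, this is equivalent to $u\ge m-2$, which in turn is equivalent to $|H|\ge\tfrac{m-2}{m}$.

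With this in hand I would split into two subcases. If $\tfrac{m-2}{m}<|H|<1$, then all hypotheses of Theorem~\ref{th: Haizhong Li} hold and both inequalities in \eqref{eq: eval_Li} are \emph{strict}, so $|B|^2=m$ lies strictly between $m(r-1)$ and the upper endpoint of \eqref{eq: eval_Li}; but the conclusion of Theorem~\ref{th: Haizhong Li} forces $|B|^2$ to equal one of these two values --- a contradiction. Hence no such $M$ exists, which together with (i) gives $|H|\in(0,\tfrac{m-2}{m}]\cup\{1\}$. If instead $|H|=\tfrac{m-2}{m}$, then a short computation gives $r=\tfrac{2(m-2)}{m}\ge1$, the lower pinching is still strict but now $u=m-2$, so the upper pinching is an equality; Theorem~\ref{th: Haizhong Li} then yields $M=\mathbb{S}^1(\sqrt{1-c^2})\times\mathbb{S}^{m-1}(c)$ with $c^2=\tfrac{m-2}{mr}=\tfrac12$, i.e. $M=\mathbb{S}^1(1/\sqrt{2})\times\mathbb{S}^{m-1}(1/\sqrt{2})$. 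The converse in (ii) is again a Section~\ref{sec:2.1} computation, with $m_1=1$, $m_2=m-1$ (note $m_1\ne m_2$ since $m\ge4$), which gives precisely $|H|=\tfrac{m-2}{m}$.

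The calculations above are routine; the points that need care are the bookkeeping that identifies the root $u=m-2$ of the quadratic with the curvature threshold $|H|=\tfrac{m-2}{m}$, and the check that the normalizations of the scalar curvature in Proposition~\ref{th: curb_scal_hyp} and in Theorem~\ref{th: Haizhong Li} are compatible (a quick test on $\mathbb{S}^1(1/\sqrt2)\times\mathbb{S}^{m-1}(1/\sqrt2)$ confirms this). The only real constraint is $m\ge4$: it is what guarantees $r\ge1$ throughout $[\tfrac{m-2}{m},1)$ so that Theorem~\ref{th: Haizhong Li} applies --- for $m=3$ there would be an interval $(\tfrac13,\tfrac1{\sqrt3})$ on which $r<1$ and the argument could not get started.
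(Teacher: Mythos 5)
Your proposal is correct and follows essentially the same route as the paper: combine $|B|^2=|A|^2=m$ (from Theorem~\ref{th: bih subm S^n}) with the scalar curvature formula of Proposition~\ref{th: curb_scal_hyp}, verify the hypotheses of Li's pinching theorem (Theorem~\ref{th: Haizhong Li}) on the range $|H|\in[\tfrac{m-2}{m},1)$, and use its rigidity conclusion to exclude the open interval and identify the endpoint case; the paper's substitution $t=m|H|^2-1$ and your variable $u=m(r-1)+2$ encode the same quadratic inequality. Your explicit factorization of that quadratic and your remark on where $m\ge4$ enters are accurate refinements of what the paper leaves implicit.
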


\begin{proof}
Since $M$ is proper biharmonic with constant mean curvature $|H|$,
Theorem \ref{th: bih subm S^n} implies that
\begin{equation}\label{eq: normB^2}
|B|^2=|A|^2=m.
\end{equation}
We shall denote, for convenience, $t=m|H|^2-1$.

Suppose that $|H|\in(\frac{m-2}{m},1)$, which is equivalent to
$t\in\left(\frac{(m-4)(m-1)}{m},m-1\right)$. By using Proposition
\ref{th: curb_scal_hyp}, we obtain that
\begin{equation}\label{eq: r}
r=1+\frac{t}{m-1}.
\end{equation}
Condition (i) of Theorem \ref{th: Haizhong Li} is equivalent to
$t\geq 0$, which is satisfied. Also, using \eqref{eq: normB^2},
since $t< m-1$, the first inequality of \eqref{eq: eval_Li} is
satisfied. The second inequality of \eqref{eq: eval_Li} becomes
$$
0\leq mt^2-(m^2-6m+4)t-(m-4)(m-1)
$$
and it is satisfied since $t>\frac{(m-4)(m-1)}{m}$. We are now in
the hypotheses of Theorem \ref{th: Haizhong Li} and we get $r=2$,
i.e. $|H|=1$, or $r=\frac{2(m-2)}{m}$, i.e. $|H|=\frac{m-2}{m}$,
thus we have a contradiction. Conclusively, we obtain
$|H|\in(0,\frac{m-2}{m}]\cup\{1\}$.

Case (i) is given by Theorem \ref{th: classif_bih const mean}. It
can also be proved by using Theorem \ref{th: Haizhong Li}.

For (ii), as we have already seen, if
$M=\mathbb{S}^1(1/\sqrt{2})\times\mathbb{S}^{m-1}(1/\sqrt{2})$, then
$|H|=\frac{m-2}{m}$. Conversely, if $|H|=\frac{m-2}{m}$, then
$r=\frac{2(m-2)}{m}$, and we are in the hypotheses of Theorem
\ref{th: Haizhong Li}, thus we conclude.
\end{proof}

\section{Open problems}

In view of all the above results the following conjectures were
proposed.
\begin{conjecture}[\cite{BMO1}]
The only proper biharmonic hypersurfaces in $\mathbb{S}^{m+1}$ are
the open parts of hyperspheres $\mathbb{S}^{m}(1/\sqrt{2})$ or of
generalized Clifford tori $\mathbb{S}^{m_1}(1/\sqrt{2})\times
\mathbb{S}^{m_2}(1/\sqrt{2})$, $m_1+m_2=m$, $m_1\neq m_2$.
\end{conjecture}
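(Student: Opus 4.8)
The plan is to split the conjecture into two sub-statements and to assemble the classification from the rigidity results already quoted. Write the conjecture as the conjunction of \textbf{(I)} every proper biharmonic hypersurface in $\mathbb{S}^{m+1}$ has constant mean curvature, and \textbf{(II)} every constant mean curvature proper biharmonic hypersurface in $\mathbb{S}^{m+1}$ is an open part of $\mathbb{S}^m(1/\sqrt2)$ or of a torus $\mathbb{S}^{m_1}(1/\sqrt2)\times\mathbb{S}^{m_2}(1/\sqrt2)$, $m_1\neq m_2$. Fragments of both are already secured: for hypersurfaces with at most two distinct principal curvatures, (I) is Theorem~\ref{th: curb_med_const_2_curv_princ} and (II) is Theorem~\ref{th: classif_hypersurf_2_curv_princ}; for isoparametric hypersurfaces, (II) is the classification of \cite{IIU}; and the compact case in $\mathbb{S}^4$ is fully settled by Theorem~\ref{th: hyper_S4}. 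The task is to upgrade these to an arbitrary number of principal curvatures and, ultimately, to the non-compact (``open part'') setting.

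For (II) I would exploit that, by Theorem~\ref{th: bih subm S^n}(ii), a constant mean curvature proper biharmonic hypersurface satisfies $|A|^2=m$, a \emph{constant}, and that by Proposition~\ref{th: curb_scal_hyp} its scalar curvature is the constant $s=m^2(1+|H|^2)-2m$; equivalently the trace-free second fundamental form $\Phi=A-|H|\,\Id$ has constant squared norm $|\Phi|^2=m(1-|H|^2)$. The strategy is to feed these constancies into the Simons-type identity for constant mean curvature hypersurfaces in the sphere, whose left-hand side $\tfrac12\Delta|A|^2$ then vanishes identically. In the compact case one integrates to obtain $\int_M|\nabla A|^2=-\int_M Q(A)$, where $Q$ is the Simons curvature polynomial; evaluating $Q$ at the biharmonic value $|A|^2=m$ and invoking the sharp Cheng--Yau / Alencar--do Carmo pinching estimates, which is exactly the mechanism of Theorem~\ref{th: Haizhong Li} already exploited in Theorem~\ref{th: raf_tip_hyper}, should force $\nabla A=0$ and pin $|\Phi|^2$ to one of finitely many isoparametric values. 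A parallel second fundamental form makes $M$ isoparametric with at most two distinct principal curvatures, after which Theorem~\ref{th: classif_hypersurf_2_curv_princ} closes the argument.

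For (I) I would start from the tangential biharmonic equation $2A(\grad|H|)+m|H|\grad|H|=0$. On any open set where $\grad|H|\neq0$ this shows that $-\tfrac{m|H|}{2}$ is a principal curvature with principal direction $\grad|H|/|\grad|H||$, while the normal equation $\Delta^\perp H=(m-|A|^2)H$ reduces to a scalar equation linking $|H|$ to $|A|^2$. Splitting off this distinguished direction, the remaining shape operator is constrained by $\trace A=m|H|$ and by the Codazzi equations; the aim is to propagate these constraints to a contradiction with $\grad|H|\neq0$, as was done when the number of principal curvatures was at most two. In the compact case a maximum-principle variant is also available: at an interior maximum of $|H|$ the normal equation forces $|A|^2\leq m$, and comparison with the Cauchy--Schwarz bound $|A|^2\geq m|H|^2$ yields pointwise control that one would try to promote to global constancy.

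The decisive obstacle is statement (I) in full generality. Constancy of $|H|$ is presently known only under an a priori bound on the number of principal curvatures, and the Codazzi analysis of the distinguished direction $\grad|H|$ becomes intractable as this number grows, with nothing in the biharmonic system bounding it a priori. A secondary but genuine difficulty is that the conjecture admits non-compact examples, so the integral Simons and pinching machinery underlying my approach to (II) is unavailable and must be replaced by a purely pointwise rigidity argument. Resolving (I) for an arbitrary number of principal curvatures, and making the passage from $|A|^2=m$ to $\nabla A=0$ local, are the two places where input genuinely beyond the results quoted above would be required.
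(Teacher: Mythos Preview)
The statement you are attempting to prove is labelled a \emph{Conjecture} in the paper and appears in the section ``Open problems''; the paper offers no proof whatsoever, only the partial evidence collected in Section~\ref{sec:2} (Theorems~\ref{th: curb_med_const_2_curv_princ}, \ref{th: classif_hypersurf_2_curv_princ}, \ref{th: non-exist_isoparam_3princ}, \ref{th: hyper_S4}, and the isoparametric classification of \cite{IIU}) together with the mean-curvature gap of Theorem~\ref{th: raf_tip_hyper}. There is therefore nothing to compare your proposal against.

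To your credit, you correctly identify the decomposition into (I) constancy of $|H|$ and (II) rigidity for constant mean curvature, and you are candid that (I) is the genuine obstruction. But your write-up should not be framed as a proof: the passages ``should force $\nabla A=0$'' and ``the aim is to propagate these constraints to a contradiction'' are hopes, not arguments. In particular, the Simons-type route you sketch for (II) does \emph{not} yield $\nabla A=0$ from $|A|^2=m$ alone: Theorem~\ref{th: Haizhong Li} requires the pinching hypothesis \eqref{eq: eval_Li}, and Theorem~\ref{th: raf_tip_hyper} shows this only excludes the range $|H|\in(\tfrac{m-2}{m},1)$, leaving $|H|\in(0,\tfrac{m-2}{m}]$ wide open even in the compact case. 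Your maximum-principle idea for (I) also does not close: $|A|^2\le m$ at a maximum of $|H|$ combined with $|A|^2\ge m|H|^2$ gives only $|H|\le 1$, which is already Theorem~\ref{th: classif_bih const mean}. The honest summary is that the conjecture remains open, and your proposal is a reasonable survey of the known partial approaches together with a correct diagnosis of where they stall.
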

\begin{conjecture}[\cite{BMO1}]
Any proper biharmonic submanifold in $\mathbb{S}^n$ has constant
mean curvature.
\end{conjecture}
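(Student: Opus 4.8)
The statement is the general constant-mean-curvature conjecture, which remains open; what follows is the strategy I would pursue and the precise point at which it resists the methods available here. The plan is to work directly with the tangential part of the biharmonic system in Theorem~\ref{th: bih subm S^n}, namely $4\trace A_{\nabla^\perp_{(\cdot)}H}(\cdot)+m\grad(|H|^2)=0$, and to prove that the open set $U=\{p\in M:\grad|H|(p)\neq 0\}$ is empty. On $U$ one may write $H=|H|\nu$ with $\nu=H/|H|$ a unit normal field, so that $\nabla^\perp_X H=X(|H|)\,\nu+|H|\,\nabla^\perp_X\nu$. Substituting into the tangential equation and pairing with $\grad|H|$ would yield a first structural relation forcing $\grad|H|$ to be, up to the lower-order term coming from $\nabla^\perp\nu$, an eigenvector of $A_\nu$ with eigenvalue $-\tfrac{m}{2}|H|$. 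In the hypersurface case $\nabla^\perp\nu=0$ and this becomes exact, recovering the second equation of \eqref{eq: caract_bih_hipersurf_spheres}.

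Second, I would feed this eigendirection information into the normal part $\Delta^\perp H+\trace B(\cdot,A_H(\cdot))-mH=0$ and into the Codazzi equation for $M$ in $\mathbb{S}^n$. Along the integral curves of $\grad|H|$, the Codazzi identity rigidly links the variation of the remaining principal curvatures (and of the normal connection) to the prescribed eigenvalue $-\tfrac{m}{2}|H|$, exactly as in the proofs of Theorems~\ref{th: curb_med_const_2_curv_princ} and \ref{th: classif_hypersurf_2_curv_princ}. For compact $M$ I would then convert this into a Bochner/integral argument: compute $\tfrac12\Delta|H|^2$ using the normal equation to control $\Delta^\perp H$, integrate over $M$, and attempt to reach an inequality whose only solution is $\grad|H|\equiv0$. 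Once $|H|$ is shown constant, the mean curvature is constant and the classification results quoted above (for parallel $H$, for two or three distinct principal curvatures, for pseudo-umbilical submanifolds, and in $\mathbb{S}^4$) complete the picture.

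The hard part, and the reason the statement is only conjectural, is twofold. In codimension one the first step already pins $\grad|H|$ as a principal direction with principal curvature $-\tfrac{m}{2}|H|$, but closing the argument requires simultaneous control of \emph{all} the remaining principal curvatures along $U$; the Codazzi--Gauss system does not decouple unless the number of distinct principal curvatures is bounded a priori, which is precisely why only the two- and three-curvature cases are settled (Theorems~\ref{th: curb_med_const_2_curv_princ} and \ref{th: non-exist_isoparam_3princ}). In higher codimension the direction $\nu$ need not be parallel, the shape operators $\{A_\xi\}$ in distinct normal directions need not commute, and the extra term $|H|\,\trace A_{\nabla^\perp_{(\cdot)}\nu}(\cdot)$ couples the non-parallelism of $\nu$ to $\grad|H|$; disentangling these contributions so that the tangential equation still forces $\grad|H|=0$ is the genuine obstacle and is not overcome by the techniques used here. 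I therefore expect the plan to succeed only under auxiliary hypotheses — parallel $H$, a bounded number of principal curvatures, or low codimension and dimension — matching the partial results recorded above, while the general case appears to require a new idea.
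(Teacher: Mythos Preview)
The statement is a \emph{conjecture}, listed in the paper's ``Open problems'' section without any proof or proof sketch; you correctly recognized this at the outset. Your proposal is therefore not to be compared against a proof in the paper, since none exists. Your outline of the strategy --- exploiting the tangential equation to pin $\grad|H|$ as a principal direction, then feeding this into Codazzi/Gauss and an integral argument --- and your diagnosis of the obstructions (no a priori bound on the number of distinct principal curvatures in codimension one; non-commuting shape operators and non-parallel $\nu$ in higher codimension) are accurate and consistent with the partial results the paper surveys. There is nothing to correct.
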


\section{Further remarks}

There is an interesting link between the proper biharmonic
hypersurfaces in $\mathbb{S}^{m+1}$ and the $II$-minimal
hypersurfaces. We briefly recall here the notion of $II$-minimal
hypersurfaces (see \cite{SHSV}). We denote by $\mathcal{E}$ the set
of all hypersurfaces in a semi-Riemannian manifold $(N,h)$ for which
the first, as well as the second, fundamental form is a
semi-Riemannian metric. The critical points of the area functional
of the second fundamental form
$$
\mathrm{Area}_{II}:\mathcal{E}\to\mathbb{R}, \qquad
\mathrm{Area}_{II}(M)=\int_M \sqrt{|\mathrm{det} A|}\,v_g
$$
are called $II$-minimal.  According to \cite{SHSV}, we have
\begin{proposition}
Let $\mathbb{S}^m(a)$ be the hypersphere of radius $a\in(0,1)$ in
$\mathbb{S}^{m+1}$. The following are equivalent
\begin{itemize}
\item[(i)] $\mathbb{S}^m(a)$ is proper biharmonic,
\item[(ii)] $\mathbb{S}^m(a)$ is $II$-minimal,
\item[(iii)] $a=1/\sqrt{2}$.
\end{itemize}
\end{proposition}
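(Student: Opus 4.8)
The equivalence (i) $\Leftrightarrow$ (iii) is essentially already recorded in the discussion preceding this section: by part~(ii) of Theorem~\ref{th: bih subm S^n} a hypersurface in $\mathbb{S}^{m+1}$ with non-zero constant mean curvature is proper biharmonic precisely when $|A|^2=m$, and for $\mathbb{S}^m(a)$ one has $|A|^2=mb^2/a^2$ with $a^2+b^2=1$, so $|A|^2=m$ forces $a^2=b^2=1/2$ (see \cite{CMO1}). Hence the only thing to prove is (ii) $\Leftrightarrow$ (iii). The plan is to compute $\mathrm{Area}_{II}$ explicitly on the one-parameter family $\{\mathbb{S}^m(a)\}_{a\in(0,1)}$ and then to argue, by symmetry, that criticality of $\mathrm{Area}_{II}$ on $\mathcal{E}$ at $\mathbb{S}^m(a)$ is equivalent to criticality of the single-variable function $a\mapsto\mathrm{Area}_{II}(\mathbb{S}^m(a))$.

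For the computation, $\mathbb{S}^m(a)$ is totally umbilical in $\mathbb{S}^{m+1}$ with shape operator $A=\kappa\,\Id$, $|\kappa|=|b|/a$; thus $\det A=\kappa^m$ is nowhere zero exactly when $a\in(0,1)$, which is precisely the range for which $\mathbb{S}^m(a)\in\mathcal{E}$. Since $\vol(\mathbb{S}^m(a))=a^m\,\omega_m$ with $\omega_m=\vol(\mathbb{S}^m(1))$, one gets
$$
\mathrm{Area}_{II}(\mathbb{S}^m(a))=\int_{\mathbb{S}^m(a)}\sqrt{|\det A|}\,v_g=\Big(\frac{|b|}{a}\Big)^{m/2}a^m\,\omega_m=\omega_m\,\big(a^2(1-a^2)\big)^{m/4}.
$$
Writing $u=a^2\in(0,1)$, the function $u\mapsto\omega_m(u-u^2)^{m/4}$ is smooth with derivative a positive multiple of $(u-u^2)^{m/4-1}(1-2u)$, hence it has a unique critical point, at $u=1/2$, i.e.\ at $a=1/\sqrt{2}$.

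The step that needs genuine justification — and the one I expect to be the main obstacle — is why this one-variable calculation detects $II$-minimality, which by definition concerns all variations within $\mathcal{E}$. The group $G=SO(m+1)$ acts isometrically on $\mathbb{S}^{m+1}\subset\mathbb{R}^{m+1}\times\mathbb{R}$ by rotating the first factor; it acts transitively on each $\mathbb{S}^m(a)$, so a hypersurface near $\mathbb{S}^m(a)$ that is $G$-invariant is a constant normal graph over it, hence another concentric $\mathbb{S}^m(a')$. The functional $\mathrm{Area}_{II}$ is invariant under all ambient isometries, being built solely from the induced first and second fundamental forms. Since $G$ is compact, one may average any normal variation field of $\mathbb{S}^m(a)$ over $G$ without changing the value of the first variation, and the averaged field is tangent to the family $a'\mapsto\mathbb{S}^m(a')$; equivalently, the principle of symmetric criticality yields that $\mathbb{S}^m(a)$ is a critical point of $\mathrm{Area}_{II}$ on $\mathcal{E}$ if and only if $\frac{d}{da}\mathrm{Area}_{II}(\mathbb{S}^m(a))=0$. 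Combined with the previous paragraph this gives that $\mathbb{S}^m(a)$ is $II$-minimal if and only if $a=1/\sqrt{2}$, completing the chain of equivalences.

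An alternative route, avoiding the infinite-dimensional symmetric-criticality argument, is to take from \cite{SHSV} the Euler--Lagrange equation characterizing $II$-minimal hypersurfaces and substitute the umbilical data $A=\kappa\,\Id$ of $\mathbb{S}^m(a)$ into it; the equation then collapses to an algebraic relation in $\kappa$ whose only solution in the admissible range is $\kappa^2=1$, i.e.\ $a=1/\sqrt{2}$. Either way, once Theorem~\ref{th: bih subm S^n} has handled (i) $\Leftrightarrow$ (iii), the whole statement reduces to the elementary observation that $a^2(1-a^2)$ has a unique interior critical point on $(0,1)$.
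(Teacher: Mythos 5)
Your argument is correct, but note that the paper itself gives no proof of this proposition: it is simply quoted from \cite{SHSV} (``According to \cite{SHSV}, we have\ldots''), so there is no in-paper proof to match. For (i)$\Leftrightarrow$(iii) you reproduce exactly the computation the paper records in Section~\ref{sec:2.1} ($|A|^2=mb^2/a^2$ plus part (ii) of Theorem~\ref{th: bih subm S^n}), which is fine. For (ii)$\Leftrightarrow$(iii) the reference \cite{SHSV} proceeds by deriving the general Euler--Lagrange equation of $\mathrm{Area}_{II}$ (the vanishing of the ``mean curvature of the second fundamental form'') and evaluating it on the totally umbilical example, which is the alternative route you sketch at the end; your primary route instead exploits the cohomogeneity-one structure: $\mathrm{Area}_{II}(\mathbb{S}^m(a))=\omega_m\bigl(a^2(1-a^2)\bigr)^{m/4}$ is computed correctly, and the reduction to this one-variable problem is legitimate because $SO(m+1)$ acts transitively on $\mathbb{S}^m(a)$, so the first variation of the invariant functional against a normal field $f\nu$ is $C\int_M f\,v_g$ with $C$ a constant (the Euler--Lagrange density is constant on a homogeneous hypersurface), and this vanishes for all $f$ iff it vanishes for $f\equiv 1$, i.e.\ iff $\tfrac{d}{da}\mathrm{Area}_{II}(\mathbb{S}^m(a))=0$. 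This averaging/symmetric-criticality reduction buys you a proof that needs no knowledge of the general $II$-minimality equation, at the cost of only working for homogeneous examples (it would not, for instance, handle a classification statement). The only point worth making explicit if you write this up is that $\mathcal{E}$ is open around $\mathbb{S}^m(a)$ for $a\in(0,1)$ (since $\det A=\kappa^m\neq 0$ there), so that arbitrary small normal variations stay in the domain of $\mathrm{Area}_{II}$ and the first-variation argument is meaningful.
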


\begin{proposition}
Let $M=\mathbb{S}^{m_1}(a_1)\times \mathbb{S}^{m_2}(a_2)$,
$a_1\in(0,1)$, $a_1^2+a_2^2=1$, be the generalized Clifford torus in
$\mathbb{S}^{m+1}$, $m_1+m_2=m$. The following are equivalent
\begin{itemize}
\item[(i)] $M$ is proper biharmonic,
\item[(ii)] $M$ is $II$-minimal and non-minimal,
\item[(iii)] $a_1=a_2=1/\sqrt{2}$ and $m_1\neq m_2$.
\end{itemize}
\end{proposition}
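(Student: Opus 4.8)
The plan is to prove the two equivalences $(i)\Leftrightarrow(iii)$ and $(ii)\Leftrightarrow(iii)$ separately: the first is a direct application of Theorem~\ref{th: bih subm S^n}, the second rests on the first variation of $\mathrm{Area}_{II}$. Throughout one uses the standard geometry of $M=\mathbb{S}^{m_1}(a_1)\times\mathbb{S}^{m_2}(a_2)\subset\mathbb{S}^{m+1}$: it is a compact orientable hypersurface, hence carries a global unit normal; it has $\nabla^\perp H=0$, constant mean curvature $|H|=\frac{1}{m a_1 a_2}|m_1 a_2^2-m_2 a_1^2|$, and constant principal curvatures $\lambda_1=a_2/a_1$ of multiplicity $m_1$ and $\lambda_2=-a_1/a_2$ of multiplicity $m_2$ (for one orientation), so $|A|^2=m_1(a_2/a_1)^2+m_2(a_1/a_2)^2$; moreover $\nabla A=0$, the two principal distributions being the parallel factors of the product metric and the eigenvalues being constant. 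Since $\lambda_1,\lambda_2\neq0$ for every $a_1\in(0,1)$, the second fundamental form is non-degenerate, so $M\in\mathcal{E}$, and small normal deformations of $M$ stay in $\mathcal{E}$.

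For $(i)\Leftrightarrow(iii)$ I would argue as follows. By Theorem~\ref{th: bih subm S^n}(ii), since $|H|$ is constant, $M$ is proper biharmonic if and only if $|H|\neq0$ and $|A|^2=m$. Setting $x=a_1^2\in(0,1)$ and clearing denominators, $|A|^2=m$ becomes $m_1(1-x)^2+m_2 x^2=m x(1-x)$, i.e.
\[
2m\,x^2-(3m_1+m_2)\,x+m_1=(2x-1)(m x-m_1)=0 ,
\]
so either $a_1^2=1/2$ or $a_1^2=m_1/m$. In the second case $m_1 a_2^2-m_2 a_1^2=0$, hence $|H|=0$ and $M$ is minimal, which is excluded; in the first case $a_1=a_2=1/\sqrt2$ and $|H|=|m_1-m_2|/m$, which is non-zero exactly when $m_1\neq m_2$. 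This yields $(i)\Leftrightarrow(iii)$ (and recovers the characterization already recorded in Subsection~\ref{sec:2.1}).

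For $(ii)\Leftrightarrow(iii)$ I would first record that
\[
\mathrm{Area}_{II}(M)=\sqrt{|\det A|}\;\vol(M)=\big(a_1^{(m_2-m_1)/2}a_2^{(m_1-m_2)/2}\big)\big(a_1^{m_1}a_2^{m_2}\big)\vol(\mathbb{S}^{m_1})\vol(\mathbb{S}^{m_2})=(a_1 a_2)^{m/2}\,\vol(\mathbb{S}^{m_1})\vol(\mathbb{S}^{m_2}) ,
\]
so that $|\det A|$ is constant for $M$. Inserting $\nabla A=0$ and $|\det A|=\mathrm{const}$ into the first variation formula for $\mathrm{Area}_{II}$ of \cite{SHSV}, which along a normal variation $f\nu$ has the form (with the signs appropriate to the conventions)
\[
\frac{d}{dt}\Big|_{t=0}\mathrm{Area}_{II}(M_t)=\int_M\sqrt{|\det A|}\,\Big(\tfrac{1}{2}\,\trace(A^{-1}\nabla^2 f)+\tfrac{f}{2}\,(\trace A^{-1}-\trace A)\Big)\,v_g ,
\]
the term $\trace(A^{-1}\nabla^2 f)$ is a divergence (for $M$, $A^{-1}$ is parallel, so $\trace(A^{-1}\nabla^2 f)=\Div(A^{-1}\nabla f)$, which integrates to zero), and the remaining factors leave the integral, giving $\frac{d}{dt}\big|_{t=0}\mathrm{Area}_{II}(M_t)=\tfrac12\sqrt{|\det A|}\,(\trace A^{-1}-\trace A)\int_M f\,v_g$. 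Hence $M$ is $II$-minimal if and only if $\trace A=\trace A^{-1}$, i.e.
\[
m_1\tfrac{a_2}{a_1}-m_2\tfrac{a_1}{a_2}=m_1\tfrac{a_1}{a_2}-m_2\tfrac{a_2}{a_1}\ \Longleftrightarrow\ (m_1+m_2)\Big(\tfrac{a_2}{a_1}-\tfrac{a_1}{a_2}\Big)=0\ \Longleftrightarrow\ a_1=a_2=1/\sqrt2 ,
\]
which can equivalently be seen by noting that the homogeneity of $M$ reduces $II$-minimality to criticality of $(a_1 a_2)^{m/2}$ along the Clifford family, again forcing $a_1=a_2=1/\sqrt2$ (at which value the principal curvatures are $\pm1$, so $A^{-1}=A$). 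Since, under that condition, non-minimality ($|H|=|m_1-m_2|/m\neq0$) is equivalent to $m_1\neq m_2$, we obtain $(ii)\Leftrightarrow(iii)$.

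The main obstacle is the first variation formula for $\mathrm{Area}_{II}$: genuine $II$-minimality means criticality among \emph{all} hypersurfaces in $\mathcal{E}$, not merely along the Clifford family, so for the implication $(iii)\Rightarrow(ii)$ one really needs the Euler--Lagrange density $\frac12\trace(A^{-1}\nabla^2(\cdot))+\frac12(\trace A^{-1}-\trace A)(\cdot)$ weighted by $\sqrt{|\det A|}$; deriving it (or quoting it from \cite{SHSV}), fixing the sign conventions so that the Hessian term is a pure divergence, and checking that nearby competitors stay in $\mathcal{E}$ is where the care is required. Everything else is the elementary algebra above, and the hypersphere case of the previous proposition is handled in the same way: there $\mathrm{Area}_{II}(\mathbb{S}^m(a))=(ab)^{m/2}\vol(\mathbb{S}^m)$ with $a^2+b^2=1$ and $A=(b/a)\Id$, so $\trace A=\trace A^{-1}$ forces $a=b=1/\sqrt2$.
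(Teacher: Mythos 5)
Your argument is correct, but note that the paper itself offers no proof of this proposition: it is stated with the attribution ``According to \cite{SHSV}'', so there is nothing internal to compare against and your write-up is a genuinely self-contained substitute. The equivalence $(i)\Leftrightarrow(iii)$ is exactly the computation the paper already records in Subsection~2.1 (via Theorem~\ref{th: bih subm S^n}(ii) and the factorization $(2x-1)(mx-m_1)=0$, with the root $x=m_1/m$ discarded because it forces $H=0$), so that part is routine. The substance is $(ii)\Leftrightarrow(iii)$, and your treatment is sound: since $\mathrm{Area}_{II}(M)=\int_M\sqrt{|\det A|}\,v_g$ is the volume of the (non-degenerate, signature $(m_1,m_2)$) metric $II$, its first variation under $f\nu$ is $\tfrac12\int\sqrt{|\det A|}\,h^{ij}\,\delta h_{ij}\,v_g$ with $\delta h_{ij}=\pm\nabla_i\nabla_jf+f\bigl((A^2)_{ij}-g_{ij}\bigr)$ in the unit sphere, which reproduces your density $\pm\trace(A^{-1}\nabla^2f)+f(\trace A-\trace A^{-1})$; the overall sign ambiguity you flag is harmless because criticality only requires the density to vanish. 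Your observation that $\nabla A=0$ and $|\det A|=\mathrm{const}$ turn the Hessian term into $\Div(A^{-1}\grad f)$, which integrates to zero, is the key step that reduces $II$-minimality to the pointwise condition $\trace A=\trace A^{-1}$, i.e.\ $m(a_2^2-a_1^2)/(a_1a_2)=0$, hence $a_1=a_2=1/\sqrt2$; and the restriction-to-the-family argument (criticality of $(a_1a_2)^{m/2}$) gives a cheap independent check of necessity. The only point worth making explicit if this were to be written out in full is the quotation or derivation of the Haesen--Verpoort Euler--Lagrange equation with its curvature term for the ambient sphere, which you correctly identify as the one external ingredient; everything else is elementary and matches the data ($|H|$, $|A|^2$, principal curvatures) already listed in the paper.
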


\section*{Acknowledgements}

The authors would like to thank Professors I. Dimitric and S.
Verpoort for helpful suggestions and discussions.

%\printindex
\end{document}